\documentclass[11pt,a4paper]{amsart}

\usepackage[T1]{fontenc}
\usepackage{lmodern}
\usepackage{amsmath,amssymb,mathtools}
\usepackage{microtype}
\usepackage[hidelinks]{hyperref}
\hypersetup{
 pdftitle={A Logarithm-Free Critical Endpoint Estimate for the Two-Dimensional Hermite Operator},
 pdfauthor={}
}

\allowdisplaybreaks
\newtheorem{theorem}{Theorem}[section]
\newtheorem{proposition}[theorem]{Proposition}
\newtheorem{lemma}[theorem]{Lemma}
\numberwithin{equation}{section}

\newcommand{\R}{\mathbb R}
\newcommand{\T}{\mathbb T}
\newcommand{\N}{\mathbb N}
\newcommand{\calH}{\mathcal H}
\newcommand{\one}{\mathbf 1}
\newcommand{\dd}{\,\mathrm d}
\newcommand{\norm}[1]{\lVert #1\rVert}
\makeatletter
\newcommand{\proofstep}[1]{%
  \par\begingroup
  \dimen@=4\baselineskip
  \vskip\z@\@plus\dimen@
  \penalty-100\vskip\z@\@plus-\dimen@
  \vskip\dimen@\penalty9999\vskip-\dimen@\vskip\z@skip
  \endgroup
  \smallskip\noindent\emph{#1}}
\makeatother

\title[endpoint eigenfunction estimates]
{Sharp endpoint eigenfunction estimates for the two-dimensional Hermite operator}
\author{Guiyu Xie and Cheng Zhang}
\address{Yau Mathematical Sciences Center, Tsinghua University, Beijing, 100084, P.R. China}
\email{xiegy25@mails.tsinghua.edu.cn; czhang98@tsinghua.edu.cn}
\date{}

\begin{document}
\begin{abstract}
Let \(\calH=-\Delta+|x|^2\) be the Hermite operator on \(\R^2\), and let
\(\Pi_\lambda\) denote the spectral projection corresponding to
\(\lambda=2N+2\). We prove the sharp log-free endpoint estimate
\[
\norm{\Pi_\lambda}_{L^2(\R^2)\to L^{10/3}(\R^2)}
\lesssim \lambda^{-1/10}.
\]
The proof uses a spectral decomposition in polar coordinates and combines Koch--Tataru localized spectral projection bounds with a
Liouville--Green representation, van der Corput estimates for exponential sums,
and a weighted \(TT^*\) argument across radial scales.
\end{abstract}
	\subjclass[2010]{42B99, 42C10}
\keywords{Hermite operator, eigenfunction, spectral projection}
\maketitle

\section{Introduction}

\subsection{Background}
Consider the Hermite operator
\[
 \mathcal H=-\Delta+|x|^2
\]
on \(\R^d\). For \(n\in\mathbb N_0\), let \(h_n\) denote the standard
\(L^2(\R)\)-normalized one-dimensional Hermite function, satisfying
\((-\partial_t^2+t^2)h_n=(2n+1)h_n\). For a multi-index
\(\alpha=(\alpha_1,\ldots,\alpha_d)\in\mathbb N_0^d\), set
\[
 |\alpha|=\alpha_1+\cdots+\alpha_d,
 \qquad
 h_\alpha(x)=h_{\alpha_1}(x_1)\cdots h_{\alpha_d}(x_d).
\]
The functions \(h_\alpha\) form the Cartesian Hermite orthonormal basis of
\(L^2(\R^d)\) and satisfy
\(\mathcal Hh_\alpha=(2|\alpha|+d)h_\alpha\). For \(\lambda=2N+d\), with
\(N\in\mathbb N_0\), let
\[
 \Pi_\lambda f
 =\sum_{|\alpha|=N}\langle f,h_\alpha\rangle h_\alpha
\]
be the orthogonal projection onto the \(\lambda\)-eigenspace. The problem is
to determine the optimal growth or decay of
\(\|\Pi_\lambda\|_{L^2(\R^d)\to L^p(\R^d)}\). Early estimates of Karadzhov \cite{Karadzhov} and
Thangavelu \cite{Thangavelu,Thangavelu1998}, followed by the global and spatially localized estimates of
Koch--Tataru \cite{KochTataru}, established the sharp powers away from the critical exponent
\[
 p_e(d)=\frac{2(d+3)}{d+1}.
\] Moreover, Wang--Zhang \cite{WangZhang} proved sharp local \(L^p\) bounds on arbitrarily translated and
dilated compact sets, refining the local consequences of the Koch--Tataru
estimates. The problem of obtaining $L^p$ eigenfunction bounds has received considerable interest in the context of Bochner-Riesz means \cite{Thangavelu, thang87, Thangavelu1998,Karadzhov, leeadv,chentams, chenadv,chencr,chenjga}, as well as unique continuation problems \cite{esca,ev, kt09}.

At \(p=p_e(d)\), the annular estimates of Koch--Tataru give the factor
\(\lambda^{-1/(2(d+3))}(\log\lambda)^{1/p_e(d)}\). In dimension one the
analogous logarithmic loss is known to be unavoidable \cite{Markett}. This left
open whether the logarithmic factor was necessary in dimensions at least two.
For \(d\ge3\), Jeong--Lee--Ryu \cite{JLR} removed the logarithmic loss by
asymmetric localization on the two sides of the turning hypersurface. While
this paper was being completed, Jeong--Lee--Ryu posted the preprint
\cite{JLR2D}, which proves the same two-dimensional endpoint estimate through
asymmetric localization, a recursive multiscale decomposition in space and
time, and almost orthogonality.

The proof presented here was developed independently. 
After a spectral decomposition in polar coordinates, a uniform Liouville--Green representation
reduces the non-glancing contribution to an exponential sum, and one
weighted \(TT^*\) argument couples all relevant radial scales. The remaining
modes are handled by standard dyadic decompositions and rapid decay in the
forbidden region. Thus the logarithmic loss is removed without recursive
space--time localization or an iterative almost-orthogonality argument.

\subsection{Main result}
Our main result is the following logarithm-free estimate.

\begin{theorem}\label{thm:main}
There exists an absolute constant \(C\) such that, for every
\(\lambda\in2\N+2\) and every \(f\in L^2(\R^2)\),
\begin{equation}\label{eq:main}
 \norm{\Pi_\lambda f}_{L^{10/3}(\R^2)}
 \le C\lambda^{-1/10}\norm f_{L^2(\R^2)}.
\end{equation}
\end{theorem}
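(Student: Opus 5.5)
We follow the strategy announced in the introduction: decompose in polar coordinates and treat the angular modes separately. The eigenspace for \(\lambda=2N+2\) splits into pieces indexed by the angular frequency \(k\in\mathbb Z\), \(|k|\le N\), \(k\equiv N \pmod 2\). Each piece is one-dimensional, spanned by
\[
e^{ik\theta}\,r^{-1/2}u_k(r),
\qquad
u_k''+\Big(\lambda-r^2-\tfrac{k^2-1/4}{r^2}\Big)u_k=0 ,
\]
where \(u_k\) is a normalized Laguerre function. Writing \(\Pi_\lambda f=\sum_k a_k e^{ik\theta}r^{-1/2}u_k(r)\) with \(\sum|a_k|^2=\|f\|_2^2\), the claim is
\[
\Big\|\sum_k a_k e^{ik\theta}r^{-1/2}u_k(r)\Big\|_{L^{10/3}(r\,dr\,d\theta)}\lesssim \lambda^{-1/10}\|a\|_{\ell^2}.
\]
For each \(k\), the radial potential has two turning points \(r_\pm(k)\), with \(r_\pm^2=\tfrac12\big(\lambda\pm\sqrt{\lambda^2-4k^2}\big)\). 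We split the \((r,k)\) region into three parts:
\begin{itemize}
\item the forbidden region, where \(r<r_-\) or \(r>r_+\);
\item glancing neighbourhoods of \(r_\pm\) of Airy width;
\item the oscillatory (non-glancing) region, where \(r_-(1+\delta)<r<r_+(1-\delta)\), measured in the natural rescaled variables.
\end{itemize}
The forbidden contributions decay exponentially in the Agmon distance and can be summed trivially.

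The glancing contributions are handled by dyadic decomposition in the distance to the turning circle \(|x|=\sqrt\lambda\), or to the inner caustic. On each dyadic piece we invoke the Koch--Tataru localized spectral projection bounds. These lose the logarithm only when summed at the endpoint. To sum without loss, we use their off-endpoint versions: an \(\ell^2\) bound at one exponent and an improved bound at another. We then interpolate, or use that functions living in different angular-frequency shells are almost orthogonal in \(L^{10/3}\) after a Littlewood--Paley argument in \(\theta\). This should yield geometric decay away from the critical scale, so the dyadic sum converges.

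The core is the non-glancing region. There we use the uniform Liouville--Green approximation
\[
u_k(r)\approx c_k\,q_k(r)^{-1/4}\cos\!\big(\phi_k(r)-\pi/4\big),
\qquad q_k=\lambda-r^2-k^2/r^2,
\]
with a symbol-type error. The operator then becomes, up to errors, an oscillatory sum
\[
T a(r,\theta)=\sum_k a_k\,w_k(r)\,e^{i(k\theta\pm\phi_k(r))},
\]
and we bound \(\|Ta\|_{10/3}\) by a \(TT^*\) argument. The kernel of \(TT^*\) is
\[
K\big((r,\theta),(r',\theta')\big)=\sum_k w_k(r)w_k(r')\,e^{i(k(\theta-\theta')\pm(\phi_k(r)-\phi_k(r')))} .
\]
Since \(\partial_k^2\phi_k\neq0\) in the oscillatory regime, van der Corput's second-derivative test for exponential sums (or Poisson summation plus stationary phase) gives a dispersive bound of the form
\[
|K|\lesssim \lambda^{\alpha}|r-r'|^{-1/2}\cdot(\text{weights}).
\]
We then apply the Hardy--Littlewood--Sobolev inequality in the radial variable together with the trivial \(L^2\) bound, and interpolate to the critical exponent in a Stein--Tomas fashion. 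The point is to do this with weights \(w_k(r)\), which encode how close the pair \((r,k)\) is to the turning region, inside one \(TT^*\) across all radial scales, rather than dyadically. The HLS step is lossless at the endpoint exactly because the kernel has the homogeneous decay \(|r-r'|^{-1/2}\) uniformly in scale.

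I expect the main obstacle to be obtaining a uniform van der Corput bound near the edges of the non-glancing region. There the second derivative \(\partial_k^2\phi_k\) degenerates and the amplitudes \(q_k^{-1/4}\) blow up. Balancing these two effects so that they still fit the single weighted HLS inequality will require a careful choice of the cutoff \(\delta\), possibly a \(\lambda\)-dependent one matched to the Airy scale. It will also require that the glancing dyadic estimates take over exactly at the scale where this balance fails.
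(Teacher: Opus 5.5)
\textbf{There is a genuine gap in how the near-turning-point (glancing) contributions are summed.} Your overall architecture matches the paper's: polar modes, Liouville--Green for the non-glancing modes, one weighted \(TT^*\) with a van der Corput bound in the angular frequency, Koch--Tataru for the remainder, and decay in the forbidden region. But the step that actually removes the logarithm on the glancing side is missing, and what you propose in its place does not work.

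\emph{Why interpolation fails.} At \(p=10/3\) the Koch--Tataru annular bound \(\norm{\one_{\{u\approx\mu\}}\Pi_\lambda}_{2\to10/3}\lesssim\lambda^{-1/10}\) carries the factor \(\mu^0\). Off the endpoint, the annular bounds \(\lambda^{-\delta_p/2}\mu^{1/4-5\delta_p/4}\) are dominated by \(\mu\approx1\) below \(10/3\) and by \(\mu\approx\lambda^{-2/3}\) above it. The summed exponent is therefore \(\max(-\delta_p/2,\,-1/6+\delta_p/3)\), which is convex in \(1/p\) with a kink exactly at \(10/3\). Interpolating off-endpoint estimates gives the chord, which loses a power of \(\lambda\); there is no geometric decay to extract.

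\emph{Why Littlewood--Paley alone does not help.} A Littlewood--Paley decomposition on a single annulus still sees every mode with \(\nu_m\ll\mu\) at full strength, so it gives no decay either.

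\emph{The mechanism that works.} The output annuli are disjoint, so each annulus only needs to receive an \(L^2\)-orthogonal piece of the input. After the non-glancing modes (handled by the global \(TT^*\)) and the far-forbidden modes are removed, proceed as follows on \(\{u\approx\rho_j\}\).
\begin{enumerate}
\item Write the modes with \(\kappa_0^2u<\nu_m<u/c_1\) (that is, \(|m|\approx\lambda\sqrt u\)) as \(\mathsf Q_uG_j\).
\item Here \(G_j\in\operatorname{Ran}\Pi_\lambda\) keeps only the modes with \(\nu_m\approx\rho_j\), and \(\mathsf Q_u\) is an angular band projection, bounded on \(L^{10/3}(\T)\) uniformly in \(u\) by M.~Riesz.
\item Apply Koch--Tataru to \(G_j\) and use \(\sum_j\norm{G_j}_2^{10/3}\le(\sum_j\norm{G_j}_2^2)^{5/3}\lesssim\norm g_2^{10/3}\).
\end{enumerate}

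\textbf{Consequences for your split.} This forces the cuts to be at fixed ratios, not at Airy width, and it removes the obstacle you leave open. Your plan runs Liouville--Green and van der Corput up to the Airy layer. There the symbol \((u-\nu_m)^{-1/4}\) is unbounded and the uniform-curvature and bounded-variation hypotheses fail, and you give no concrete way to balance this. With \(\nu_m\le\kappa_0^2u\) the problem does not arise: the symbol is \(O(1)\) with variation \(O((\lambda\sqrt u)^{-1})\), and the phase difference has curvature \(\approx|t-s|\) of constant sign, with \(t=\sqrt u/\lambda\).

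Likewise, the forbidden part should only be \(u\le c_1\nu_m\). There the decay is \((\lambda\nu_m^{3/2})^{-M}\), summed via Hausdorff--Young and \(\sum_{\nu_m\ge a}\nu_m^{-5}\lesssim\lambda a^{-9/2}\). Agmon decay at Airy distance from the caustic could not be summed trivially. The inner caustics need no separate treatment, because \(\{u\ge u_0\}\) meets only \(O(1)\) Koch--Tataru pieces.

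\textbf{The homogeneity claim is off.} The dispersive bound is \(|t-s|^{-1/2}\) in the variable \(t\), after the \(L\delta^{1/2}\) term is absorbed using \(\lambda^2\min(t,s)|t-s|\le1\). It is not \(|r-r'|^{-1/2}\) uniformly in scale. After interpolating in \(\theta\) and inserting the amplitude \(\lambda^{-1}t^{-1/2}\) and the measure \(\lambda^3t\,dt\), the radial kernel is \(t^{-1/5}|t-s|^{-1/5}s^{-1/5}\). This is not dominated by the HLS kernel \(|t-s|^{-3/5}\), so you need a Stein--Weiss-type bound, for example via the substitution \(t=e^x\) and Young's inequality.
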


The estimate \eqref{eq:main} is sharp. Indeed,  we consider the  highest-angular-momentum eigenfunction
\[
G_N(x_1,x_2)
=\frac{(x_1+ix_2)^N}{\sqrt{\pi N!}}e^{-|x|^2/2}.
\]
It belongs to \(\operatorname{Ran}\Pi_{2N+2}\) and has \(L^2\)-norm one and 
$
\norm{G_N}_{10/3}\approx N^{-1/10}.
$

Let \(p=10/3\) and \(u=1-|x|^2/\lambda\). On each  annulus
\(\{\mu/2\le u\le2\mu\}\), where
\(\lambda^{-2/3}\lesssim\mu\lesssim1\), 
Koch--Tataru \cite[Theorem~3(a)]{KochTataru} gives
\[
 \norm{\one_{\{\mu/2\le u\le2\mu\}}\Pi_\lambda}_{2\to p}
 \lesssim\lambda^{-1/10}.
\]
Taking the \(\ell^{p}\) sum over the output annuli gives only
\[
 \lambda^{-1/10}
 \bigl(\#\{\text{dyadic annuli}\}\bigr)^{1/p}
 \approx
 \lambda^{-1/10}(\log\lambda)^{3/10}.
\]
The proof of Theorem \ref{thm:main} couples the radial scales and removes this logarithmic
factor.

\subsection{Outline of the proof}
Set \(g=\Pi_\lambda f\) and \(u=1-|x|^2/\lambda\).  The localized estimates of Koch--Tataru in
Section~\ref{sec:local} control the turning-point core, the deep interior,
and the exterior.  It remains to estimate \(g\) on the interior collar
\(\mathcal A_\lambda=\{\lambda^{-2/3}\ll u\ll1\}\). 

On \(\mathcal A_\lambda\), use the polar basis from
Lemma~\ref{lem:polar-basis} and the expansion \eqref{eq:polar-expansion}.
For the \(m\)-th mode, the parameter \(\nu_m\)
in \eqref{eq:nu-definition} gives the outer radial turning point.  The
factorization \eqref{eq:Q-factor-u} shows that \(u>\nu_m\) is the
allowed region, while \(u<\nu_m\) is the
forbidden region.  A mode is called
non-glancing at radius \(u\) when \( u\gg \nu_m\), so it stays a
fixed relative distance from the turning point.

For the non-glancing modes, Lemma~\ref{lem:lg-radial} gives a uniform
Liouville--Green representation.  Lemmas~\ref{lem:phase-second-variation}
and \ref{lem:discrete-vdc}, together with the weighted \(TT^*\) estimate in
Proposition~\ref{prop:weighted-sum}, couple all radial scales and yield
Proposition~\ref{prop:below-threshold}.

Comparable scales $u\approx \nu_m$ are controlled by the localized
Koch--Tataru estimates and the uniform angular multiplier bounds.  When
\(u\ll \nu_m\), Lemma~\ref{lem:forbidden-radial} gives rapid decay in the
forbidden region.  Then orthogonality and  Young's inequality yield
Proposition~\ref{prop:complementary}.  Combining these estimates proves
Theorem~\ref{thm:main}.  

\subsection{Organization of the paper}
 Section~\ref{sec:local} records the localized
Koch--Tataru estimates in the parameters used here.
Section~\ref{sec:polar} introduces the polar decomposition and the
mode-dependent turning parameter. Section~\ref{sec:radial} proves the
Liouville--Green representation in the allowed region and rapid decay in the
forbidden region. Section~\ref{sec:weighted} establishes the weighted 
exponential-sum estimate. Section~\ref{sec:non-glancing} applies this estimate
to the non-glancing modes. Section~\ref{sec:complementary} treats the remaining
modes by dyadic decompositions and orthogonality. Section~\ref{sec:proof-main}
combines the estimates to prove Theorem \ref{thm:main}.

\subsection{Notation}
Throughout the paper, \(X\lesssim Y\) means that \(X\le CY\), and
\(X\approx Y\) means that both \(X\lesssim Y\) and \(Y\lesssim X\) hold.
Implicit constants are independent of the eigenvalue. Moreover,
\(X\gg Y\) means that \(X\ge CY\) for a sufficiently large fixed constant
\(C\).

\subsection{Acknowledgments}
The authors are supported in part by the National Key R\&D Program of China
2024YFA1015300.  C.Z. is also supported in part by NSFC Grant 12371097.
\subsection{Statements and Declarations}
Data sharing not applicable to this article as no datasets were generated or analyzed during the current study. The authors have no relevant financial or non-financial interests to disclose.
\section{Koch--Tataru estimates}\label{sec:local}

Koch--Tataru use \(\rho^2\) for the eigenvalue \(\lambda=2N+2\). Recall that
\(u(x)=1-|x|^2/\lambda\), and set
\[
 \begin{gathered}
  \rho=\sqrt\lambda,\qquad
  \eta(x)=\frac{|x|}{\sqrt\lambda}-1,
  \qquad u_*=\lambda^{-2/3},\\
  y=\rho^{-2/3}(\rho^2-|x|^2)=\lambda^{2/3}u(x).
 \end{gathered}
\]
Thus \(u\) is the signed normalized radial coordinate relative to the turning
circle \(\{|x|=\sqrt\lambda\}\) and is positive on its interior side.
The quantity \(u_*\) is the turning-point scale. For \(\mu>0\), write
\[
 \mathcal A_\mu^{\rm in}=\{x:\tfrac12\mu\le u(x)\le2\mu\},
 \qquad
 \mathcal A_\mu^{\rm out}=\{x:\tfrac12\mu\le\eta(x)\le2\mu\}.
\]
Since
\[
 u=(1-|x|/\rho)(1+|x|/\rho),
 \qquad -y=\lambda^{2/3}(2\eta+\eta^2),
\]
each of these annuli meets only a bounded number of pieces in the
Koch--Tataru turning-point decomposition.
Choose fixed constants
\begin{equation}\label{eq:lg-constant-hierarchy}
 \begin{gathered}
  0<u_0<U_1<U_2<\frac14,\qquad 0<\eta_0<\frac14,\\
  0<\kappa_0<\kappa_1<\frac14,\qquad 0<c_1<1,
 \end{gathered}
\end{equation}
with \(u_0\) and \(\eta_0\) sufficiently small that the bounded-overlap
constants above are uniform for \(0<\mu\le u_0\) and
\(0<\mu\le\eta_0\), respectively. Choose
\(C_*\ge\max\{1,2/c_1\}\) sufficiently large, depending only on these fixed
constants, for the estimates in Sections~\ref{sec:radial}--\ref{sec:complementary}.
All implicit constants below are independent of \(N,\lambda,m,j,k\). We may
assume \(\lambda\ge\lambda_*\) for a fixed \(\lambda_*\), since the projection
norms associated with the finitely many smaller eigenvalues can be absorbed
into the implicit constants.

We first state the precise input. For
\(g\in\operatorname{Ran}\Pi_\lambda\), one has
\((\mathcal H-\rho^2)g=0\). We retain the notation
\(\ell^\infty_\rho L^p\) of Koch--Tataru for the supremum of the
\(L^p\)-norms over the pieces of their turning-point decomposition. In
dimension two, \cite[Theorem~3(a)]{KochTataru} gives, for \(2\le p\le6\),
\begin{equation}\label{eq:KT-weighted}
 \left\|
 \rho^{\,1/3-(2/3)\delta_p}
 \langle y\rangle_+^{-1/4+(5/4)\delta_p}
 \langle y\rangle_-^{1-\delta_p}g
 \right\|_{\ell^\infty_\rho L^p}
 \lesssim\norm g_2,
 \qquad
 \delta_p=\frac12-\frac1p,
\end{equation}
where \(\langle y\rangle_\pm=1+\max(\pm y,0)\).

\begin{proposition}
If \(C_*u_*\le\mu\le u_0\), then
\begin{equation}\label{eq:package-allowed}
 \norm{\one_{\mathcal A_\mu^{\rm in}}\Pi_\lambda}_{2\to10/3}
 \lesssim\lambda^{-1/10}.
\end{equation}
For \(u_*\le\mu\le u_0\),
\begin{equation}\label{eq:package-inner-L2}
 \norm{\one_{\mathcal A_\mu^{\rm in}}\Pi_\lambda}_{2\to2}
 \lesssim\mu^{1/4}.
\end{equation}
For every fixed \(A\ge1\),
\begin{equation}\label{eq:package-core}
 \norm{\one_{\{|u|\le Au_*\}}\Pi_\lambda}_{2\to10/3}
 \lesssim\lambda^{-1/10},
\end{equation}
and the fixed deep-interior region satisfies
\begin{equation}\label{eq:package-deep}
 \norm{\one_{\{u\ge u_0\}}\Pi_\lambda}_{2\to10/3}
 \lesssim\lambda^{-1/10}.
\end{equation}
If \(C_*u_*\le\mu\le\eta_0\), then
\begin{equation}\label{eq:package-outer}
 \norm{\one_{\mathcal A_\mu^{\rm out}}\Pi_\lambda}_{2\to10/3}
 \lesssim(\lambda\mu)^{-3/10},
\end{equation}
while
\begin{equation}\label{eq:package-far-outer}
 \norm{\one_{\{\eta\ge\eta_0/2\}}\Pi_\lambda}_{2\to10/3}
 \lesssim\lambda^{-1/10}.
\end{equation}
\end{proposition}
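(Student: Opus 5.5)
Each bound will be read off from \eqref{eq:KT-weighted} by computing the weight on the relevant region and noting that each annulus meets only boundedly many pieces of the Koch--Tataru decomposition. For $p=10/3$ we have $\delta_p=1/5$, so the weight is
\[
 w(y)=\rho^{1/3-2/15}\langle y\rangle_+^{-1/4+1/4}\langle y\rangle_-^{4/5}
 =\rho^{1/5}\langle y\rangle_-^{4/5}.
\]
The factor $\langle y\rangle_+$ drops out entirely, which is exactly why the interior annuli give no gain. For $p=2$ we have $\delta_p=0$, and the weight is $\rho^{1/3}\langle y\rangle_+^{-1/4}\langle y\rangle_-$. Since $\rho^{1/5}=\lambda^{1/10}$, the $\ell^\infty_\rho L^p$ bound restricted to a single piece gives $\|g\|_{L^{10/3}(\text{piece})}\lesssim \lambda^{-1/10}\langle y\rangle_-^{-4/5}\|g\|_2$, with $\langle y\rangle_-$ evaluated on that piece. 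Every set in the proposition is covered by $O(1)$ pieces, uniformly in $\mu$; this is the bounded-overlap property granted by the choice of $u_0,\eta_0$. So the $\ell^\infty$ norm controls the $L^{10/3}$ norm up to a constant.

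\emph{Interior bounds.} On $\mathcal A^{\rm in}_\mu$ we have $y=\lambda^{2/3}u\ge0$, so $\langle y\rangle_-=1$ and \eqref{eq:package-allowed} follows at once. The same reasoning gives \eqref{eq:package-core}, since $|u|\le Au_*$ means $|y|\le A$ and hence $\langle y\rangle_\pm\approx1$. This region meets boundedly many pieces, the number depending only on $A$. For \eqref{eq:package-inner-L2}, the $p=2$ weight on $\mathcal A^{\rm in}_\mu$ is about $\rho^{1/3}(\lambda^{2/3}\mu)^{-1/4}=\lambda^{1/6-1/6}\mu^{-1/4}=\mu^{-1/4}$. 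This gives $\|\one_{\mathcal A_\mu^{\rm in}}g\|_2\lesssim\mu^{1/4}\|g\|_2$, and at $\mu\approx u_*$ it is consistent with the $\langle y\rangle\approx1$ case. For the deep interior \eqref{eq:package-deep}, the Koch--Tataru decomposition has only $O(1)$ pieces in $\{u\ge u_0\}$: the dyadic annuli in $u\in[u_0,1]$ together with the disc near the origin, which in their setup is a single region where $\langle y\rangle_-=1$. So again we get $\lambda^{-1/10}$. If one prefers not to lean on the precise shape of their decomposition there, one can instead quote the non-endpoint interior bound of \cite[Theorem~3]{KochTataru} on compact subsets of the allowed region.

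\emph{Exterior bounds.} On $\mathcal A^{\rm out}_\mu$ we have $-y=\lambda^{2/3}(2\eta+\eta^2)\approx\lambda^{2/3}\mu$, which is $\gg1$ because $\mu\ge C_*u_*$. Hence $\langle y\rangle_-^{-4/5}\approx(\lambda^{2/3}\mu)^{-4/5}$, and
\[
 \lambda^{-1/10}(\lambda^{2/3}\mu)^{-4/5}=\lambda^{-19/30}\mu^{-4/5}\le(\lambda\mu)^{-3/10},
\]
the last step using $\lambda\mu\gtrsim\lambda^{1/3}\ge1$. Indeed, the ratio of the left side to the right side is $(\lambda^{2/3}\mu)^{-1/2}\lesssim1$. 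So \eqref{eq:package-outer} follows, even with room to spare. One could also simply invoke the exponential decay in Koch--Tataru's outer estimate.

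For \eqref{eq:package-far-outer} there are two possibilities. If the far region $\{\eta\ge\eta_0/2\}$ is a single piece of their decomposition (or $O(1)$ of them), the weight there is $\gtrsim\lambda^{1/10}\lambda^{(2/3)(4/5)}$, which gives a better bound than needed. Otherwise I would sum the dyadic outer annuli $\eta\approx2^k$ using the weights $\langle y\rangle_-^{-4/5}\approx(\lambda^{2/3}2^{2k})^{-4/5}$, which form a geometric series. The main point to check carefully is the bookkeeping: that each annulus meets only uniformly boundedly many Koch--Tataru pieces, including the deep interior and far exterior, so that the $\ell^\infty$ norm genuinely dominates.
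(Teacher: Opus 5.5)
Your argument is correct. For \eqref{eq:package-allowed}, \eqref{eq:package-inner-L2}, \eqref{eq:package-core}, \eqref{eq:package-deep} and \eqref{eq:package-outer} it is the paper's proof. You restrict \eqref{eq:KT-weighted} to the $O(1)$ pieces meeting each set and evaluate the weight at $\delta_p=1/5$ and $\delta_p=0$, and you use the same identity $\lambda^{-19/30}\mu^{-4/5}=(\lambda\mu)^{-3/10}(\lambda^{2/3}\mu)^{-1/2}$. The paper treats the range $u_*\le\mu<C_*u_*$ of \eqref{eq:package-inner-L2} separately, using the core pieces with $A=2C_*$ to get $\lambda^{-1/6}=u_*^{1/4}\le\mu^{1/4}$. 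Your remark about the $\langle y\rangle\approx1$ case amounts to the same thing.

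The one genuine divergence is \eqref{eq:package-far-outer}.

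\emph{The paper's route.} It does not use part (a) there. It invokes \cite[Theorem~3(b)]{KochTataru} with $p=\infty$ and decay exponent one on $D_\rho^{\rm ext}$. Since $\langle y\rangle_-\gtrsim\lambda^{2/3}$ on $\{\eta\ge\eta_0/2\}$, this gives an $L^2\to L^\infty$ bound of $O(\lambda^{-2/3})$ there. H\"older interpolation with the trivial $L^2$ bound then yields $\lambda^{-4/15}$.

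\emph{Your route.} You stay within (a), using $\langle y\rangle_-^{4/5}\gtrsim\lambda^{8/15}$. If the far region is split into infinitely many pieces, you add a geometric sum over dyadic $\eta$. This gives the stronger bound $\lambda^{-19/30}$ from a single input.

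\emph{Comparison.} Your route depends on how Koch--Tataru partition the unbounded exterior, which is exactly the bookkeeping you flag at the end. The paper's route is insensitive to that partition, because it is an $L^\infty$ bound on the whole set $D_\rho^{\rm ext}$. Either suffices, since only $\lambda^{-1/10}$ is needed.
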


\begin{proof}
Let \(g=\Pi_\lambda f\), so \(\norm g_2\le\norm f_2\). Restricting
\eqref{eq:KT-weighted} to the three types of pieces gives
\begin{align*}
 \norm{\one_{\mathcal A_\mu^{\rm in}}g}_p
 &\lesssim
 \lambda^{-\delta_p/2}
 \mu^{1/4-(5/4)\delta_p}\norm g_2,
 &&u_*\lesssim\mu\le u_0,\\
 \norm{\one_{\{|u|\le Au_*\}}g}_p
 &\lesssim
 \lambda^{-1/6+\delta_p/3}\norm g_2,\\
 \norm{\one_{\mathcal A_\mu^{\rm out}}g}_p
 &\lesssim
 \lambda^{-5/6+\delta_p}
 \mu^{-(1-\delta_p)}\norm g_2,
 &&C_*u_*\le\mu\le\eta_0.
\end{align*}
Here the middle estimate follows because the indicated set meets only
\(O(1)\) turning-point pieces.

For \(p=10/3\), one has \(\delta_p=1/5\), and hence
\[
 -\frac{\delta_p}{2}=-\frac1{10},\qquad
 \frac14-\frac54\delta_p=0,
 \qquad
 -\frac16+\frac13\delta_p=-\frac1{10}.
\]
The first two estimates therefore prove \eqref{eq:package-allowed} and
\eqref{eq:package-core}. The first estimate with \(p=2\) proves
\eqref{eq:package-inner-L2} when \(\mu\ge C_*u_*\).
For the remaining range
\(u_*\le\mu<C_*u_*\), the middle estimate
with \(p=2\) and \(A=2C_*\) gives
\[
 \lambda^{-1/6}=u_*^{1/4}\le\mu^{1/4}.
\]
At
\(p=10/3\), the positive-\(y\) weight in \eqref{eq:KT-weighted} also has
exponent zero. Since
\(\{u\ge u_0\}\) meets only \(O(1)\) pieces, this proves
\eqref{eq:package-deep}.

The exterior estimate at \(p=10/3\) is
\[
 \norm{\one_{\mathcal A_\mu^{\rm out}}\Pi_\lambda}_{2\to10/3}
 \lesssim\lambda^{-19/30}\mu^{-4/5}
 =(\lambda\mu)^{-3/10}(\lambda^{2/3}\mu)^{-1/2}.
\]
Because \(\mu\ge C_*u_*\), it implies
\eqref{eq:package-outer}.

For the far exterior, \cite[Theorem~3(b)]{KochTataru} with \(p=\infty\)
and decay exponent one gives
\[
 \norm{\langle y\rangle_-\Pi_\lambda f}_{L^\infty(D_\rho^{\rm ext})}
 \lesssim\norm f_2,
 \qquad
 D_\rho^{\rm ext}=\{|x|>\rho+\tfrac12\rho^{-1/3}\}.
\]
For large \(\lambda\), the set \(\{\eta\ge\eta_0/2\}\) lies in
\(D_\rho^{\rm ext}\), and \(\langle y\rangle_-\gtrsim\lambda^{2/3}\)
there. Thus the corresponding \(L^2\to L^\infty\) norm is
\(O(\lambda^{-2/3})\). Interpolation with the \(L^2\) bound gives
\(\lambda^{-4/15}\le\lambda^{-1/10}\), proving
\eqref{eq:package-far-outer}. The finitely many remaining eigenvalues are
absorbed into the constant.
\end{proof}

The estimate \eqref{eq:package-outer} is summable over the exterior annuli.
Indeed, for \(g=\Pi_\lambda f\) and dyadic
\(\mu\in[C_*u_*,\eta_0]\),
\begin{align}
 \norm{\one_{\{C_*u_*\le\eta\le\eta_0\}}g}_{10/3}^{10/3}
 &\lesssim\sum_\mu
 \norm{\one_{\mathcal A_\mu^{\rm out}}g}_{10/3}^{10/3}\notag\\
 &\lesssim\lambda^{-1}\sum_\mu\mu^{-1}\norm f_2^{10/3}
 \lesssim\lambda^{-1/3}\norm f_2^{10/3}.
 \label{eq:outer-sum}
\end{align}
Here
\(\sum_\mu\mu^{-1}\lesssim(C_*u_*)^{-1}\approx\lambda^{2/3}\).
It remains to estimate the interior collar
\[
 \mathcal A_\lambda=\{x:C_*u_*\le u(x)\le u_0\}
\]
by the angular decomposition.

\section{Spectral decomposition in polar coordinates}\label{sec:polar}

Write \(\T=\R/(2\pi\mathbb Z)\). In polar coordinates
\(x=(r\cos\theta,r\sin\theta)\), with \(\theta\in\T\), set
\[
 \mathcal M_N=\{m\in\mathbb Z:|m|\le N,\ m\equiv N\pmod2\},
 \qquad
 n_m=\frac{N-|m|}{2}\in\mathbb N_0.
\]
Here \(L_n^\alpha\) denotes the generalized Laguerre polynomial of degree \(n\)
and parameter \(\alpha\), normalized as in
\cite[(1.1.37)]{Thangavelu}. For \(m\in\mathcal M_N\), let
\begin{equation}\label{eq:radial-laguerre}
 \begin{split}
  R_{N,m}(r)&=
  \left(\frac{2n_m!}{(n_m+|m|)!}\right)^{1/2}
  r^{|m|}L_{n_m}^{|m|}(r^2)e^{-r^2/2},\\
  \Phi_{N,m}(r,\theta)&=\frac1{\sqrt{2\pi}}R_{N,m}(r)e^{im\theta}.
 \end{split}
\end{equation}

\begin{lemma}[Polar Laguerre--Hermite basis]\label{lem:polar-basis}
For every \(m\in\mathcal M_N\),
\[
 \int_0^\infty|R_{N,m}(r)|^2r\dd r=1.
\]
The family
\(\{\Phi_{N,m}:m\in\mathcal M_N\}\) is an orthonormal basis of
\(\operatorname{Ran}\Pi_\lambda\), where
\(\lambda=2N+2\). Consequently, every
\(g\in\operatorname{Ran}\Pi_\lambda\) has the unique expansion
\begin{equation}\label{eq:polar-expansion}
 \begin{gathered}
  g(r,\theta)=\frac1{\sqrt{2\pi}}
  \sum_{m\in\mathcal M_N}c_mR_{N,m}(r)e^{im\theta},\\
  \sum_{m\in\mathcal M_N}|c_m|^2=\norm g_2^2.
 \end{gathered}
\end{equation}
\end{lemma}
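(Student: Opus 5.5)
The plan is to prove three things in turn: the normalization, that each \(\Phi_{N,m}\) is an eigenfunction with the right eigenvalue, and that the family is orthonormal with the correct cardinality. The expansion \eqref{eq:polar-expansion} then follows at once.

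\emph{Normalization.} Substitute \(s=r^2\), so that \(r\dd r=\tfrac12\dd s\). This gives
\[
 \int_0^\infty|R_{N,m}|^2r\dd r
 =\frac{n_m!}{(n_m+|m|)!}\int_0^\infty s^{|m|}\bigl(L^{|m|}_{n_m}(s)\bigr)^2e^{-s}\dd s .
\]
The classical Laguerre orthogonality relation \(\int_0^\infty (L_n^\alpha)^2 s^\alpha e^{-s}\dd s=\Gamma(n+\alpha+1)/n!\), in the normalization of \cite[(1.1.37)]{Thangavelu}, makes this equal to \(1\).

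\emph{Eigenfunction property.} In polar coordinates,
\[
 \mathcal H=-\partial_r^2-r^{-1}\partial_r-r^{-2}\partial_\theta^2+r^2 .
\]
Applying this to \(R(r)e^{im\theta}\) reduces the claim to the radial ODE
\[
 -R''-r^{-1}R'+(m^2r^{-2}+r^2)R=(2N+2)R .
\]
Write \(R=r^{|m|}L(r^2)e^{-r^2/2}\) and set \(s=r^2\). A direct computation turns this ODE into the Laguerre equation
\[
 sL''+(|m|+1-s)L'+n_mL=0 ,
\]
and this step uses exactly \(2n_m+|m|+1=N+1\). The routine bookkeeping here (tracking the factors from \(r^{|m|}\) and the Gaussian) is the only computational step. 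I do not expect a real obstacle in it. The functions also lie in \(L^2\) and are smooth at the origin, since \(r^{|m|}e^{im\theta}=(x_1\pm ix_2)^{|m|}\) is a polynomial. Hence \(\Phi_{N,m}\in\operatorname{Ran}\Pi_\lambda\).

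\emph{Orthonormality and spanning.} For distinct \(m\), orthogonality comes from integrating \(e^{i(m-m')\theta}\) over \(\T\). Each \(\Phi_{N,m}\) has norm one by the normalization step. It remains to count: \(\#\mathcal M_N=N+1\), which equals \(\#\{\alpha\in\N_0^2:|\alpha|=N\}=\dim\operatorname{Ran}\Pi_\lambda\). An orthonormal family of that size in an \((N+1)\)-dimensional space is a basis.

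Uniqueness of the expansion and the Parseval identity \(\sum|c_m|^2=\norm g_2^2\) with \(c_m=\langle g,\Phi_{N,m}\rangle\) are then standard facts about orthonormal bases.
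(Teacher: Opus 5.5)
Your proposal is correct and follows essentially the same route as the paper. Both arguments get the normalization from the substitution \(s=r^2\) and the Laguerre orthogonality relation, and the eigenfunction property by reducing the polar form of \(\mathcal H\) to the Laguerre equation, with the factor \((x_1\pm ix_2)^{|m|}\) giving smoothness at the origin. Orthogonality comes from the angular integration, and the count \(\#\mathcal M_N=N+1=\dim\operatorname{Ran}\Pi_\lambda\) upgrades the orthonormal family to a basis.
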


\begin{proof}
The form of the basis is the two-dimensional case of the Hecke--Bochner
formula for Hermite projections \cite[Theorem~3.4.1]{Thangavelu}. For
completeness, we verify it directly.

Fix \(m\in\mathcal M_N\), and put \(q=|m|\) and \(k=n_m\), so that
\(2k+q=N\). In polar coordinates,
\[
 \mathcal H=-\partial_r^2-r^{-1}\partial_r-r^{-2}\partial_\theta^2+r^2.
\]
The function \(\ell(s)=L_k^q(s)\) satisfies the Laguerre equation
\[
 s\ell''(s)+(q+1-s)\ell'(s)+k\ell(s)=0.
\]
Consequently, for \(F(r)=r^qe^{-r^2/2}\ell(r^2)\), direct differentiation
gives
\[
 \left(-\partial_r^2-r^{-1}\partial_r+q^2r^{-2}+r^2\right)F
 =(4k+2q+2)F.
\]
Since \(m^2=q^2\) and \(2k+q=N\),
\[
 \mathcal H\bigl(F(r)e^{im\theta}\bigr)
 =(2N+2)F(r)e^{im\theta}.
\]
Moreover,
\[
 r^qe^{im\theta}=
 \begin{cases}
  (x_1+ix_2)^q,&m\ge0,\\
  (x_1-ix_2)^q,&m<0,
 \end{cases}
\]
so \(F(r)e^{im\theta}\) is a polynomial times \(e^{-|x|^2/2}\). It is
therefore smooth at the origin and belongs to \(\mathcal S(\R^2)\). Thus
\(\Phi_{N,m}\in\operatorname{Ran}\Pi_\lambda\).

The Laguerre orthogonality relation
\cite[(5.1.1)]{Szego} gives
\[
 \int_0^\infty s^q|L_k^q(s)|^2e^{-s}\dd s=\frac{(k+q)!}{k!}.
\]
Hence the change of variables \(s=r^2\), together with
\eqref{eq:radial-laguerre}, yields
\begin{align*}
 \int_0^\infty|R_{N,m}(r)|^2r\dd r
 &=\frac{2k!}{(k+q)!}
   \int_0^\infty r^{2q}|L_k^q(r^2)|^2e^{-r^2}r\dd r\\
 &=\frac{k!}{(k+q)!}
   \int_0^\infty s^q|L_k^q(s)|^2e^{-s}\dd s
 =1.
\end{align*}
The functions \((2\pi)^{-1/2}e^{im\theta}\) are orthonormal on \(\T\).
Therefore the functions \(\Phi_{N,m}\), \(m\in\mathcal M_N\), form an
orthonormal family in \(\operatorname{Ran}\Pi_\lambda\).

Finally, \(\mathcal M_N=\{-N,-N+2,\ldots,N\}\), so
\(\#\mathcal M_N=N+1\). By the Cartesian Hermite decomposition in the
Introduction,
\[
 \dim\operatorname{Ran}\Pi_\lambda
 =\#\{\alpha\in\mathbb N_0^2:|\alpha|=N\}=N+1.
\]
The orthonormal family therefore has the full dimension of the eigenspace and
is a basis. The unique expansion \eqref{eq:polar-expansion} and the identity
for its coefficients now follow from Parseval's identity.
\end{proof}

Set \(v_m(r)=r^{1/2}R_{N,m}(r)\). The radial Hermite equation gives
\[
 v_m''(r)+Q_m(r)v_m(r)=0,
 \qquad
 Q_m(r)=\lambda-r^2-\frac{m^2-1/4}{r^2}.
\]
For each \(m\in\mathcal M_N\), set
\begin{equation}\label{eq:nu-definition}
 \nu_m=\frac{1-\sqrt{1-4(m^2-1/4)/\lambda^2}}2,
 \qquad
 \nu_m(1-\nu_m)=\frac{m^2-1/4}{\lambda^2}.
\end{equation}
The square root in \eqref{eq:nu-definition} is real because
\(4(m^2-\tfrac14)\le4N^2-1<\lambda^2\). Moreover, \(\nu_0<0\) when
\(0\in\mathcal M_N\), whereas \(0<\nu_m<1/2\) for \(|m|\ge1\). For
\(|m|\ge1\),
\begin{equation}\label{eq:nu-m-comparable}
 \nu_m\approx\frac{m^2}{\lambda^2}.
\end{equation}
Indeed, \(1/2<1-\nu_m<1\) and
\(3m^2/4\le m^2-1/4\le m^2\).
Put \(r(u)=\sqrt{\lambda(1-u)}\). Then the exact factorization
\begin{equation}\label{eq:Q-factor-u}
 Q_m(r(u))=
 \lambda\frac{(u-\nu_m)(1-u-\nu_m)}{1-u}
\end{equation}
holds. In the collar \(u<1/4\), one has \(1-u-\nu_m>1/4\). Thus
\(Q_m(r(u))\) has the sign of \(u-\nu_m\): \(u>\nu_m\) is the
allowed region, whereas \(u<\nu_m\) is the forbidden region. For \(|m|\ge1\), \(\nu_m\)
is exactly the \(u\)-coordinate of the outer radial turning point. When
present, the value \(\nu_0<0\) is retained for uniform notation.

\section{Radial mode estimates}\label{sec:radial}

This section establishes the radial estimates needed for the angular-mode
decomposition. In the allowed region, away from the turning
point, we derive a uniform Liouville--Green representation and prove the
curvature bound for differences of the resulting phases. In the forbidden
region, we obtain rapid decay of the radial modes. 

\subsection{Liouville--Green representation away from the turning point}

\noindent\hspace*{\parindent}The Liouville--Green representation provides the
oscillatory form needed to apply Proposition~\ref{prop:weighted-sum} to the
non-glancing modes. Away from a turning point, the construction is classical;
see \cite[Chapter~6, Sections~1--5]{Olver} and \cite{OlverLG}. For completeness,
we give a detailed proof.

For \(u\in[C_*u_*,U_1]\), set
\[
 \mathcal I(u)=\{m\in\mathcal M_N:|m|\le\kappa_1\lambda\sqrt u\}.
\]
For real \(\xi\) with \(|\xi|\le\kappa_1\lambda\sqrt u\), define
\[
 Q_\xi(r)=\lambda-r^2-\frac{\xi^2-1/4}{r^2},
 \qquad
 S(u,\xi)=\int_{r(U_1)}^{r(u)}Q_\xi(s)^{1/2}\dd s.
\]
The phase is well defined. Indeed, for \(u\le v\le U_1\),
\[
 Q_\xi(r(v))
 =\lambda v-\frac{\xi^2-1/4}{\lambda(1-v)}
 \approx\lambda v>0.
\]
For \(m\in\mathcal I(u)\), put
\[
 b(u,m)=\left[\left(1-\frac{\nu_m}{u}\right)
                (1-u-\nu_m)\right]^{-1/4}.
\]

\begin{lemma}\label{lem:lg-radial}
For every \(m\in\mathcal I(U_1)\), there exist coefficients
\(d_{m,+},d_{m,-}\), depending only on \((\lambda,m)\), such that
\(|d_{m,\pm}|\le C\) and, whenever
\(u\in[C_*u_*,U_1]\) and \(m\in\mathcal I(u)\),
\begin{equation}\label{eq:lg-expansion}
 R_{N,m}(r(u))
 =\lambda^{-1/2}u^{-1/4}b(u,m)
  \sum_{\sigma=\pm}d_{m,\sigma}e^{i\sigma S(u,m)}
  +\mathcal E_m(u).
\end{equation}
Moreover,
\begin{equation}\label{eq:lg-symbol}
 |b(u,m)|\le C,
 \qquad
 |b(u,m+2)-b(u,m)|\le C(\lambda\sqrt u)^{-1},
\end{equation}
where the second estimate holds when
\(m,m+2\in\mathcal I(u)\), and
\begin{equation}\label{eq:lg-error}
 |\mathcal E_m(u)|
 \le C\lambda^{-1/2}u^{-1/4}(\lambda u^{3/2})^{-1}.
\end{equation}
\end{lemma}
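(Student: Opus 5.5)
The proof is a Liouville--Green (WKB) construction for \(v_m=r^{1/2}R_{N,m}\), which solves \(v_m''+Q_mv_m=0\), on the interval \(r\in[r(U_1),r(u)]\). This is the allowed region strictly inside the outer turning point and away from it. The plan is to solve in the variable \(r\) with the normalized phase \(S(u,m)\), control the error by the Olver error-control function, and fix the coefficients by matching at the base point \(u=U_1\), where Koch--Tataru and the normalization give \(O(1)\) size.

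\emph{Step 1 (symbol bounds).} By \eqref{eq:Q-factor-u}, \(Q_m(r(u))^{1/4}=\lambda^{1/4}u^{1/4}b(u,m)^{-1}(1-u)^{-1/4}\). The factor \((1-u)^{-1/4}\) is harmless and can be absorbed or kept in \(b\); I will check the normalization against the definition. For \(m\in\mathcal I(u)\), \eqref{eq:nu-m-comparable} gives \(\nu_m\lesssim\kappa_1^2u\), so \(1-\nu_m/u\in[1/2,1]\) and \(1-u-\nu_m\ge1/4\). This gives \(|b|\le C\). For the difference, \(\nu_{m+2}-\nu_m\approx|m|/\lambda^2\lesssim\kappa_1\sqrt u/\lambda\). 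Differentiating \(b\) in \(\nu\) produces a factor \(u^{-1}\), so \(|b(u,m+2)-b(u,m)|\lesssim(\sqrt u\,\lambda)^{-1}\), which is \eqref{eq:lg-symbol}.

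\emph{Step 2 (LG approximation with error control).} Write \(v=Q^{-1/4}(d_+e^{iS}+d_-e^{-iS})(1+\epsilon_\pm)\) in Olver's form with the Liouville variable \(\zeta=S\). The error is bounded by \(\exp(\mathcal V(F))-1\), where
\(F=\int Q^{-1/4}\,\bigl|(Q^{-1/4})''\bigr|\dd r\) (together with the \(r^{-2}\) part, already included in \(Q\)). In the variable \(u\) we have \(\dd r\approx\sqrt\lambda\,\dd u\) and \(Q\approx\lambda u\). Each \(r\)-derivative of \(Q\) costs \(\lambda^{-1/2}u^{-1}\) relative size; this uses \(\nu_m\lesssim u\), which keeps the \(m\)-dependent term smooth on scale \(u\). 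Thus the integrand is \(\approx(\lambda u)^{-1/4}(\lambda u)^{-1/4}\lambda^{-1}u^{-2}\), and integrating over \([u,U_1]\) with \(\dd r\approx\sqrt\lambda\,\dd u\) gives \(\mathcal V\lesssim\lambda^{-1}u^{-3/2}=(\lambda u^{3/2})^{-1}\). This is \(\le C_*^{-3/2}\) small because \(u\ge C_*\lambda^{-2/3}\). Since the recessive and dominant solutions coincide in the oscillatory region, both exponentials get the relative error \(O((\lambda u^{3/2})^{-1})\). Then \(R=r^{-1/2}v\) with \(r^{-1/2}\approx\lambda^{-1/4}\), which turns the prefactor \(Q^{-1/4}\approx\lambda^{-1/4}u^{-1/4}b\) into \(\lambda^{-1/2}u^{-1/4}b\). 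This yields \eqref{eq:lg-expansion} and \eqref{eq:lg-error}.

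\emph{Step 3 (coefficients).} The constants \(d_{m,\pm}\) are determined by \(v_m\) and \(v_m'\) at \(r(U_1)\); the condition \(m\in\mathcal I(U_1)\) ensures the expansion is valid there. To get \(|d_{m,\pm}|\le C\), I will use energy conservation of the Wronskian-type quantity \(|v|^2Q^{1/2}+|v'|^2Q^{-1/2}\), which is approximately constant along the interval and equals \(\approx|d_+|^2+|d_-|^2\). Averaging over \(u\in[U_1/2,U_1]\), where \(|b|\approx1\), gives \(|d_+|^2+|d_-|^2\lesssim\lambda\int_{U_1/2}^{U_1}|R_{N,m}(r(u))|^2\dd u\). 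Since \(\dd u\approx\lambda^{-1}r\dd r\), this is \(\lesssim\int|R_{N,m}|^2r\dd r=1\) by Lemma~\ref{lem:polar-basis}. Here \(R_{N,m}\) is real and \(S\) is real, so \(d_-=\overline{d_+}\). This step is the only nonlocal input.

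The main obstacle is Step 2: making the error-control variation uniform in \(m\) up to \(|m|\le\kappa_1\lambda\sqrt u\). In particular, the derivatives of the \((m^2-1/4)/r^2\) term must not spoil the \(u^{-1}\) scaling. This is exactly where \(\kappa_1<1/4\) enters, since it keeps \(\nu_m\le u/2\).
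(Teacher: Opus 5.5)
Your strategy is the paper's. The error-control variation $\int Q_m^{-1/4}|(Q_m^{-1/4})''|\,dr$ that you bound is exactly the paper's $\int|V_m|\,dz$ for $w=p^{1/2}v_m$, and the paper's Duhamel--Gronwall argument is essentially the proof of the Olver theorem you cite. The scaling $\lambda^{-1}\int_u v^{-5/2}\,dv\lesssim(\lambda u^{3/2})^{-1}$ and your Step 1 agree with the paper.

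The difference is in how $d_{m,\pm}$ are normalized and bounded, and Step 3 is incomplete as written. Averaging $E=Q^{1/2}|v_m|^2+Q^{-1/2}|v_m'|^2$ over an $r$-interval $I$ of length $\approx\sqrt\lambda$ gives only
\[
 |d_+|^2+|d_-|^2\lesssim\int_I|v_m|^2\,dr+\lambda^{-1}\int_I|v_m'|^2\,dr .
\]
The normalization $\int|R_{N,m}|^2r\,dr=1$ controls only the first term, and the $|v_m'|^2$ term is never addressed. The paper supplies exactly this missing input. The radial energy identity $\int(|R'|^2+m^2r^{-2}|R|^2+r^2|R|^2)r\,dr=\lambda$ gives $\int_J(|v_m'|^2+\lambda|v_m|^2)\,dr\lesssim\lambda$ on $J=[r(U_2),r(U_1)]$. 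Hence some $\tilde r_m\in J$ has $|w|+|w_z|\lesssim1$, and the coefficients are defined at that point.

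Alternatively, you can keep an $L^2$-only bound, but then argue through the Liouville--Green form rather than through energy conservation. Write $|d_+e^{iS}+\overline{d_+}e^{-iS}|^2=2|d_+|^2+2\,\mathrm{Re}(d_+^2e^{2iS})$ and integrate against $Q^{-1/2}dr=Q^{-1}dS$. One integration by parts, using $\partial_rS\approx\sqrt\lambda$, shows the oscillating part is $O(\lambda^{-1}|d_+|^2)$, while the main part is $\approx|d_+|^2$.

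Two smaller points. First, averaging over $u\in[U_1/2,U_1]$ leaves the range $m\in\mathcal I(u)$ when $|m|$ is near $\kappa_1\lambda\sqrt{U_1}$. This is harmless because $\nu_m\le U_1/8$ still gives $Q_m\approx\lambda v$ there, but averaging on the interior side $v\in[U_1,U_2]$, as the paper does, avoids the issue; this is why the constant $U_2$ appears.

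Second, the prefactor must be exact, not approximate. By \eqref{eq:Q-factor-u}, $r(u)^{-1/2}Q_m(r(u))^{-1/4}=\lambda^{-1/2}u^{-1/4}b(u,m)$ holds identically, because the $(1-u)^{\pm1/4}$ factors cancel. A leftover $(1-u)^{-1/4}$ could not be absorbed into the $u$-independent $d_{m,\pm}$. Nor could it go into $\mathcal E_m$, since it differs from $1$ by $O(u)$ rather than $O((\lambda u^{3/2})^{-1})$.
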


\begin{proof}
Fix \(m\in\mathcal I(U_1)\), write \(p=Q_m^{1/2}\), and set
\[
 z=z(r)=\int_{r(U_1)}^r p(s)\dd s,
 \qquad
 w(z)=p(r)^{1/2}v_m(r).
\]
Direct differentiation gives
\[
 w_{zz}+w=V_mw,
 \qquad
 V_m=\frac{p''}{2p^3}-\frac{3(p')^2}{4p^4}.
\]

Let \(J=[r(U_2),r(U_1)]\). Since \(m\in\mathcal I(U_1)\), the displayed formula for
\(Q_\xi(r(v))\), with the same direct calculation for
\(U_1\le v\le U_2\), gives
\[
 |J|\approx\sqrt\lambda,
 \qquad p(r)^2\approx\lambda\quad(r\in J).
\]
Since \(v_m=r^{1/2}R_{N,m}\),
\[
 |v_m'|^2\le2r|R_{N,m}'|^2+(2r)^{-1}|R_{N,m}|^2.
\]

 Write \(R=R_{N,m}\).
The radial eigenvalue equation is
\[
 -\frac1r(rR')'+\frac{m^2}{r^2}R+r^2R=\lambda R.
\]
Multiplying by \(\overline R\,r\) and integrating by parts gives the radial energy identity
\[
 \int_0^\infty\left(|R'|^2+\frac{m^2}{r^2}|R|^2
 +r^2|R|^2\right)r\dd r
 =\lambda\int_0^\infty|R|^2r\dd r=\lambda.
\]
Here the boundary term \([rR'\overline R]_0^\infty\) vanishes. Indeed,
\eqref{eq:radial-laguerre} gives \(R=O(r^{|m|})\) and
\(R'=O(r^{|m|-1})\) near the origin when \(|m|\ge1\), while
\(R'=O(r)\) when \(m=0\); at infinity both \(R\) and \(R'\) decay
rapidly because of the Gaussian factor. The last equality uses
Lemma~\ref{lem:polar-basis}.
Together with \(\norm{v_m}_2=1\) and \(r\approx\sqrt\lambda\) on \(J\),
this yields
\[
 \int_J\bigl(|v_m'|^2+\lambda|v_m|^2\bigr)\dd r\lesssim\lambda.
\]
Hence there is \(\tilde r_m\in J\) such that
\[
 |v_m'(\tilde r_m)|\lesssim\lambda^{1/4},
 \qquad |v_m(\tilde r_m)|\lesssim\lambda^{-1/4}.
\]
Put \(\tilde z_m=z(\tilde r_m)\). On \(J\),
\(p\approx\lambda^{1/2}\) and \(|p'|\lesssim1\); therefore
\[
 |w(\tilde z_m)|+|w_z(\tilde z_m)|\lesssim1.
\]
Define
\[
 d_{m,\pm}=\frac12e^{\mp i\tilde z_m}
 \bigl(w(\tilde z_m)\mp iw_z(\tilde z_m)\bigr).
\]
Then \(|d_{m,\pm}|\lesssim1\), and Duhamel's formula is
\begin{equation}\label{eq:lg-duhamel}
 w(z)=\sum_{\sigma=\pm}d_{m,\sigma}e^{i\sigma z}
 +\int_{\tilde z_m}^z\sin(z-\zeta)V_m(\zeta)w(\zeta)\dd\zeta.
\end{equation}

Now let \(u\in[C_*u_*,U_1]\) with \(m\in\mathcal I(u)\).
Along the interval from \(\tilde r_m\) to \(r(u)\), write \(r=r(v)\).
Then \(u\le v\le U_2\), and direct differentiation gives
\[
 p(r(v))\approx\lambda^{1/2}v^{1/2},
 \qquad |p'(r(v))|\lesssim v^{-1/2},
 \qquad |p''(r(v))|\lesssim\lambda^{-1/2}v^{-3/2}.
\]
Consequently,
\[
 |V_m|\lesssim\lambda^{-2}v^{-3},
 \qquad |\dd z|=p|\dd r|\approx\lambda v^{1/2}\dd v,
\]
and hence
\[
 \int_{\tilde z_m}^{S(u,m)}|V_m(\zeta)|\dd\zeta
 \lesssim\lambda^{-1}\int_u^{U_2}v^{-5/2}\dd v
 \lesssim(\lambda u^{3/2})^{-1}.
\]
Gronwall's inequality applied to \eqref{eq:lg-duhamel} gives
\(\sup_{\tilde z_m\le z\le S(u,m)}|w(z)|\lesssim1\). Thus the integral
remainder in \eqref{eq:lg-duhamel}, evaluated at \(z=S(u,m)\), is
\(O((\lambda u^{3/2})^{-1})\).

Since \(R_{N,m}=r^{-1/2}p^{-1/2}w\), the factorization
\eqref{eq:Q-factor-u} gives
\[
 r(u)^{-1/2}p(r(u))^{-1/2}
 =\lambda^{-1/2}
  \bigl[(u-\nu_m)(1-u-\nu_m)\bigr]^{-1/4}
 =\lambda^{-1/2}u^{-1/4}b(u,m).
\]
This proves \eqref{eq:lg-expansion} and \eqref{eq:lg-error}. The reference
point \(\tilde r_m\) depends only on \((\lambda,m)\), so the same
coefficients work for every admissible \(u\).

It remains to prove \eqref{eq:lg-symbol}. If
\(k\in\mathcal I(u)\) and \(|k|\ge1\), then
\[
 0<\nu_k\le 2\frac{k^2}{\lambda^2}
 \le2\kappa_1^2u\le\frac u8.
\]
For \(k=0\), one has \(|\nu_0|\le(4\lambda^2)^{-1}\le u/8\), after
increasing the fixed lower threshold \(\lambda_*\) if necessary. Hence
\[
 1-\frac{\nu_k}{u}\approx1,
 \qquad
 1-u-\nu_k\approx1,
\]
which proves \(|b(u,k)|\lesssim1\). If
\(m,m+2\in\mathcal I(u)\), then
\[
 (\nu_{m+2}-\nu_m)(1-\nu_{m+2}-\nu_m)
 =\frac{4(m+1)}{\lambda^2}.
\]
Since \(1-\nu_{m+2}-\nu_m\approx1\) and
\(|m+1|\lesssim\lambda\sqrt u\),
\[
 |\nu_{m+2}-\nu_m|\lesssim\frac{\sqrt u}{\lambda}.
\]
On the interval joining \(\nu_m\) and \(\nu_{m+2}\), the derivative with
respect to \(\nu\) of
\(\bigl[(1-\nu/u)(1-u-\nu)\bigr]^{-1/4}\) is \(O(u^{-1})\).
The mean value theorem therefore yields
\[
 |b(u,m+2)-b(u,m)|\lesssim(\lambda\sqrt u)^{-1}.
\]
\end{proof}

\begin{lemma}[Curvature of the radial phase]\label{lem:phase-second-variation}
For \(\tau>0\), write \(u_\tau=\lambda^2\tau^2\). Let \(s,t>0\) satisfy
\[
 u_s,u_t\in[C_*u_*,U_1].
\]
Uniformly for every real \(\xi\) satisfying
\(|\xi|\le\kappa_1\lambda^2\min(s,t)\),
\[
 \left|\partial_\xi^2\bigl(S(u_t,\xi)-S(u_s,\xi)\bigr)\right|
 \approx|t-s|.
\]
If \(t\ne s\), this derivative has the sign of \(t-s\) throughout the
stated interval.
\end{lemma}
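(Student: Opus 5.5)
The plan is to differentiate under the integral sign, observe that the $\xi$-second derivative of the integrand has a fixed sign, and then evaluate its size by the change of variables $s=r(v)$, which turns the integral into one in $u$. Write $u_{\min}=\min(u_s,u_t)=\lambda^2\min(s,t)^2$. The hypothesis $|\xi|\le\kappa_1\lambda^2\min(s,t)$ is then exactly $|\xi|\le\kappa_1\lambda\sqrt{u_{\min}}$, which is the admissibility condition defining $\mathcal I(u_{\min})$.

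\emph{Step 1: the integrand is smooth and has a fixed sign.} Let $v$ lie between $u_s$ and $u_t$. Then $v\ge u_{\min}$, so $|\xi|\le\kappa_1\lambda\sqrt v$. The formula for $Q_\xi(r(v))$ displayed before Lemma~\ref{lem:lg-radial} gives $Q_\xi(r(v))\approx\lambda v>0$, uniformly in such $\xi$. Hence $Q_\xi^{1/2}$ is smooth in $(s,\xi)$ on the compact range of integration, and we may differentiate under the integral. Since $\partial_\xi Q_\xi=-2\xi s^{-2}$,
\[
 \partial_\xi^2 Q_\xi(s)^{1/2}
 =-\frac{1}{s^2Q_\xi(s)^{1/2}}-\frac{\xi^2}{s^4Q_\xi(s)^{3/2}}<0 .
\]
The lower limit $r(U_1)$ cancels in the difference, so
\[
 \partial_\xi^2\bigl(S(u_t,\xi)-S(u_s,\xi)\bigr)
 =\int_{r(u_s)}^{r(u_t)}\partial_\xi^2 Q_\xi(s)^{1/2}\dd s .
\]
Because $r(u)$ is decreasing in $u$, and $u_t>u_s$ exactly when $t>s$, this integral of a negative function has the sign of $t-s$.

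\emph{Step 2: the size.} Substitute $s=r(v)$, so that $|\dd s|=\lambda(2r(v))^{-1}\dd v$ with $r(v)\approx\sqrt\lambda$ (recall $v\le U_1<1/4$). Using $Q_\xi\approx\lambda v$:
\begin{itemize}
 \item[(i)] the first term contributes an integrand comparable to $\lambda^{-1}v^{-1/2}$;
 \item[(ii)] the second term contributes $\approx\xi^2\lambda^{-3}v^{-3/2}$.
\end{itemize}
Since $\xi^2\le\kappa_1^2\lambda^2u_{\min}\le\kappa_1^2\lambda^2v$, term (ii) is at most a constant times term (i). Therefore the absolute value of the second variation is comparable to
\[
 \lambda^{-1}\Bigl|\int_{u_s}^{u_t}v^{-1/2}\dd v\Bigr|
 =2\lambda^{-1}\bigl|\sqrt{u_t}-\sqrt{u_s}\bigr|
 =2|t-s|,
\]
because $\sqrt{u_\tau}=\lambda\tau$.

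The only point requiring care is Step~1's uniform lower bound $Q_\xi(r(v))\gtrsim\lambda v$ along the whole path, uniformly in the admissible $\xi$. This follows from $\kappa_1<1/4$ together with the explicit formula $Q_\xi(r(v))=\lambda v-(\xi^2-1/4)/(\lambda(1-v))$. The remaining computations are routine.
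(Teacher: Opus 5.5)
Your proof is correct and follows the paper's argument essentially step for step. You cancel the common lower limit $r(U_1)$, get the sign from $\partial_\xi^2 Q_\xi^{1/2}<0$ together with the monotonicity of $r(u)$, and obtain the size from $Q_\xi(r(v))\approx\lambda v$, $r(v)\approx\sqrt\lambda$ and $\xi^2\lesssim\kappa_1^2\lambda^2 v$, which reduces everything to $\lambda^{-1}\bigl|\int_{u_s}^{u_t}v^{-1/2}\dd v\bigr|=2|t-s|$. The only blemish is notational: you use $s$ both as the radial integration variable and as the parameter in $u_s$, so rename the integration variable (the paper uses $r$).
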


\begin{proof}
Let \(p_\xi=Q_\xi^{1/2}\). For \(v\) between \(u_s\) and \(u_t\),
the explicit formula for \(Q_\xi(r(v))\) and the bound on \(\xi\) give
\[
 p_\xi(r(v))^2\approx\lambda v,
 \qquad r(v)^2\approx\lambda,
 \qquad |\dd r|\approx\lambda^{1/2}\dd v.
\]
The common lower limit \(r(U_1)\) cancels, and the endpoints do not depend
on \(\xi\). Hence
\[
 S(u_t,\xi)-S(u_s,\xi)
 =\int_{r(u_s)}^{r(u_t)}p_\xi(r)\dd r,
 \qquad
 \partial_\xi^2p_\xi
 =-\frac1{r^2p_\xi}-\frac{\xi^2}{r^4p_\xi^3}.
\]
Since \(\xi^2/(r^2p_\xi^2)\lesssim\kappa_1^2\),
\[
 -\partial_\xi^2p_\xi\approx\frac1{r^2p_\xi}.
\]
Therefore
\[
 \left|\partial_\xi^2\bigl(S(u_t,\xi)-S(u_s,\xi)\bigr)\right|
 \approx\frac1\lambda
 \left|\int_{u_s}^{u_t}v^{-1/2}\dd v\right|
 =2|t-s|.
\]
Because \(\partial_\xi^2p_\xi<0\) and \(r(u)\) is decreasing, the derivative
has the sign of \(t-s\).
\end{proof}

\subsection{Rapid decay in the forbidden region}

If \(u\le c_1\nu_m\), then
\(\nu_m-u\ge(1-c_1)\nu_m\), so the coefficient in the rescaled equation
below is uniformly positive.

\begin{lemma}\label{lem:forbidden-radial}
Suppose that
\[
 0\le u\le c_1\nu_m,
 \qquad \nu_m\ge C_*u_*.
\]
Then, for every \(M>0\),
\begin{equation}\label{eq:forbidden-radial}
 |R_{N,m}(r(u))|
 \lesssim_M
 \lambda^{-1/2}\nu_m^{-1/4}
 \bigl(\lambda\nu_m^{3/2}\bigr)^{-M}.
\end{equation}
\end{lemma}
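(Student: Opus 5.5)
We rescale the radial equation near the outer turning point and use a maximum-principle (convexity) argument together with an energy bound. Write \(r=r(v)\). Then \(\dd r=-\tfrac12\lambda^{1/2}(1-v)^{-1/2}\dd v\), and the equation \(v_m''+Q_mv_m=0\) becomes, in the variable \(v\),
\[
 \partial_v^2v_m+\text{(lower order)}\,\partial_vv_m
 =\frac{\lambda^2}{4}\,\frac{(\nu_m-v)(1-v-\nu_m)}{(1-v)^2}\,v_m .
\]
Since \(v_m\) does not depend on \(v\) except through \(r\), it is cleaner to keep the variable \(r\) and use \(Q_m(r(v))\approx-\lambda(\nu_m-v)\) from \eqref{eq:Q-factor-u}. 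On the forbidden interval \(0\le v\le c_1'\nu_m\), with \(c_1<c_1'<1\) fixed, this gives \(-Q_m\approx\lambda\nu_m\), uniformly in \(v\). This interval has \(r\)-length \(\ell\approx\lambda^{1/2}\nu_m\). Its Agmon length is therefore
\[
 \int\sqrt{-Q_m}\,\dd r\approx\lambda^{1/2}\nu_m^{1/2}\cdot\lambda^{1/2}\nu_m=\lambda\nu_m^{3/2}\gtrsim C_*^{3/2}\gg1 .
\]
Here we used \(\nu_m\ge C_*u_*=C_*\lambda^{-2/3}\).

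\emph{Step 1: an a priori \(L^\infty\) bound.} We need a crude bound for \(v_m\) at the two ends of the forbidden slab, namely at \(v=0\) and at \(v=c_1'\nu_m\), of the form \(|v_m|\lesssim\lambda^{-1/4}\nu_m^{-1/4}\), or even a polynomial loss in \(\lambda\). A polynomial loss would suffice after adjusting \(M\), except that the target is stated with \(\nu_m\)-powers, so the sharp form is preferable. We get it from the energy identity used in Lemma~\ref{lem:lg-radial}: on a window of width \(\approx\lambda^{1/2}\nu_m\) around \(v\approx c_1'\nu_m\) we have \(\int(|v_m'|^2+\lambda\nu_m|v_m|^2)\dd r\lesssim\lambda\). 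Combining this with the one-dimensional Sobolev inequality on that window, and using that there \(|Q_m|\lesssim\lambda\nu_m\), gives the bound. Alternatively, the Koch--Tataru \(L^\infty\) bound from \eqref{eq:KT-weighted} with \(p=\infty\) supplies it after integrating out \(\theta\) for a single mode. The inner end \(v=0\) is handled the same way, or simply as a point at distance \(\gtrsim\lambda^{1/2}\nu_m\) from the turning point, where \(v_m\) is convex.

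\emph{Step 2: exponential decay on the slab.} On the slab, \(v_m''=-Q_mv_m\) with \(-Q_m\ge c\lambda\nu_m\). Hence \(|v_m|^2\) is subharmonic in the sense that \((|v_m|^2)''\ge2c\lambda\nu_m|v_m|^2\). We compare with the barrier
\[
 B(r)=A\bigl(\cosh(k(r-r_c))/\cosh(k\ell/2)\bigr),\qquad k=(c\lambda\nu_m)^{1/2},
\]
centered at the midpoint \(r_c\) of the slab, where \(A\) is the Step 1 bound at the endpoints. The maximum principle gives \(|v_m|\le B\). At points with \(u\le c_1\nu_m\) the barrier is not uniformly small there, since \(u\) can be near the endpoint \(0\). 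To fix this, we instead place the inner endpoint at the origin side, using \(v<0\), that is \(r>\sqrt\lambda\), where \(Q_m\) stays negative and comparable to or larger than \(-\lambda\nu_m\). We then take the slab to be \(\{-c_1'\nu_m\le v\le c_1'\nu_m\}\) and get the bound \(\exp(-c'k\ell)=\exp(-c'\lambda\nu_m^{3/2})\) at all points with \(|v|\le c_1\nu_m\). This exponential factor is \(\lesssim_M(\lambda\nu_m^{3/2})^{-M}\). Dividing by \(r^{1/2}\approx\lambda^{1/4}\) then gives \eqref{eq:forbidden-radial}.

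The main obstacle is Step 1: the endpoint bound must carry exactly the normalization \(\lambda^{-1/2}\nu_m^{-1/4}\) for \(R_{N,m}\), including at the outer endpoint \(v=-c_1'\nu_m\). At that endpoint it follows from the monotone decay of \(|v_m|\) beyond the turning point together with \(\norm{v_m}_2=1\). I would try the energy/Sobolev argument first. If that fails, polynomial losses in \(\lambda\nu_m^{3/2}\) are harmless, because the exponential gain absorbs them.
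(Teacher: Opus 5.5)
Your Step~2 is sound. Given endpoint bounds, either your two-sided $\cosh$ barrier or the monotonicity of $v_m^2$ on the forbidden side produces the factor $e^{-c\lambda\nu_m^{3/2}}$ (there $v_m^2$ is convex for $u<\nu_m$ and tends to $0$ as $r\to\infty$).

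The gap is Step~1, which is where the real content of the lemma lies. Neither of your routes yields $|v_m|\lesssim\lambda^{-1/4}\nu_m^{-1/4}$ at $u=c_1'\nu_m$.
\begin{itemize}
\item On the window, the energy identity only provides $\int|v_m'|^2\dd r\lesssim\lambda$ and $\int|v_m|^2\dd r\le1$. One-dimensional Sobolev then gives just $|v_m|\lesssim\lambda^{1/4}$.
\item The better elementary argument, monotonicity of $v_m^2$ together with $\norm{v_m}_2=1$, is also what you invoke at the outer endpoint $u=-c_1'\nu_m$. It gives only $|v_m|^2\lesssim(\lambda^{1/2}\nu_m)^{-1}$, i.e.\ $|v_m|\lesssim\lambda^{-1/4}\nu_m^{-1/2}$.
\item The Koch--Tataru $L^\infty$ route does not help either. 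The estimate \eqref{eq:KT-weighted} is stated only for $2\le p\le6$. A sup bound valid on the whole eigenspace cannot see the smallness of an individual mode. Integrating out $\theta$ gains nothing for a sup norm of a function with radial modulus.
\end{itemize}
These losses are polynomial in $\lambda$, not in $\Lambda=\lambda\nu_m^{3/2}$. Already $\nu_m^{-1/4}=(\lambda/\Lambda)^{1/6}$ is of size $\lambda^{1/6}$ when $\nu_m\approx C_*u_*$. There $\Lambda=O(1)$, so the factor $e^{-c\Lambda}$ gains nothing.

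So your remark that a polynomial loss in $\lambda$ would suffice after adjusting $M$ is incorrect. Your fallback covers only losses polynomial in $\Lambda$, which is not what your methods produce. This matters for the paper: with such a loss, the far-mode estimate in Proposition~\ref{prop:complementary}, which is dominated by $u$ near $C_*u_*$, would lose a power of $\lambda$.

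The missing ingredient is a sharp bound on the $L^2$ mass that a single mode carries within $u$-distance $\approx\nu_m$ of its turning point. By monotonicity, $|v_m|^2$ at a point of the forbidden region is at most the average of $|v_m|^2$ over a window of length $\approx\lambda^{1/2}\nu_m$ between that point and the turning point. You therefore need this window mass to be $\lesssim\nu_m^{1/2}$ rather than $\le1$.

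The paper obtains exactly this from the $p=2$ case of Koch--Tataru, \eqref{eq:package-inner-L2}, applied to the normalized mode $\Phi_{N,m}$. Cover the strip $\{\gamma\nu_m\le u\le\gamma'\nu_m\}$, with $c_1<\gamma<\gamma'<1$, by boundedly many annuli $\mathcal A^{\rm in}_\mu$ with $\mu\approx\nu_m\ge u_*$. This gives $\norm{\one_{\{\gamma\nu_m\le u\le\gamma'\nu_m\}}\Phi_{N,m}}_2\lesssim\nu_m^{1/4}$; for $\nu_m$ bounded below, $\norm{\Phi_{N,m}}_2=1$ suffices. Hence some point of the strip has $|v_m|\lesssim\lambda^{-1/4}\nu_m^{-1/4}$.

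With this input your scheme closes, provided you bound the outer endpoint by the inner one via monotonicity rather than via $\norm{v_m}_2=1$. The paper's propagation step is one-sided instead. It rescales to $H''=\Lambda^2F_{\nu,\Lambda}H$ with $F_{\nu,\Lambda}\ge c_0>0$ for $y\le\beta$. It then uses the decay as $y\to-\infty$ to obtain $W'\ge\kappa W$ for $W=|H|^2$, so no second endpoint is needed.
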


\begin{proof}
Put
\[
 \nu=\nu_m,\qquad \Lambda=\lambda\nu^{3/2},\qquad y=\frac u\nu,
 \qquad H(y)=(1-\nu y)^{1/4}v_m(r(\nu y)).
\]
A direct calculation from the radial equation gives
\[
 H''=\Lambda^2F_{\nu,\Lambda}(y)H,
 \qquad
 F_{\nu,\Lambda}(y)
 =\frac{(1-y)(1-\nu-\nu y)}{4(1-\nu y)^2}
 -\frac{3\nu^2}{16\Lambda^2(1-\nu y)^2}.
\]
Set \(\beta=(1+c_1)/2\). For \(y\le\beta\), write \(a=1-y\). The first
term in \(F_{\nu,\Lambda}\) satisfies
\[
 \frac{a(1-2\nu+\nu a)}{4(1-\nu+\nu a)^2}
 \ge \frac{a^2}{2(1+a)^2}\ge c>0.
\]
Indeed, after removing the factor \(a/4\), the quotient on the left is
decreasing for \(0<\nu<1/2\), and its value at \(\nu=1/2\) is
\(2a/(1+a)^2\). The second term is \(O(\Lambda^{-2})\). Since
\(\Lambda\ge C_*^{3/2}\), the choice of \(C_*\) therefore ensures that
\begin{equation}\label{eq:forbidden-positivity}
 F_{\nu,\Lambda}(y)\ge c_0>0\qquad(y\le\beta).
\end{equation}

Fix \(c_1<\gamma<\gamma'<\beta\). The localized estimate
\eqref{eq:package-inner-L2}, applied to the normalized mode
\(\Phi_{N,m}\), gives
\begin{equation}\label{eq:forbidden-mode-strip}
 \left\|\one_{\{\gamma\nu\le u\le\gamma'\nu\}}\Phi_{N,m}\right\|_2
 \lesssim \nu^{1/4}.
\end{equation}
For sufficiently small \(\nu\), this follows by covering the strip by a
bounded number of annuli \(\mathcal A_\mu^{\rm in}\); the lower restriction
\(\mu\ge u_*\) follows from \(\nu\ge C_*u_*\). For the remaining fixed range
of \(\nu\), it follows from \(\|\Phi_{N,m}\|_2=1\). Since
\[
 R_{N,m}(r(\nu y))
 =\lambda^{-1/4}(1-\nu y)^{-1/2}H(y),
 \qquad r\,\dd r=-\frac{\lambda\nu}{2}\,\dd y,
\]
\eqref{eq:forbidden-mode-strip} yields
\begin{equation}\label{eq:forbidden-strip-L2}
 \norm H_{L^2(\gamma,\gamma')}
 \lesssim(\lambda\nu)^{-1/4}.
\end{equation}

By \eqref{eq:radial-laguerre}, \(H(y),H'(y)\to0\) as \(y\to-\infty\).
Let \(W=|H|^2\). On \(( -\infty,\beta]\),
\[
 W''=2|H'|^2+2\Lambda^2F_{\nu,\Lambda}|H|^2
 \ge2c_0\Lambda^2W.
\]
Set \(\kappa=\sqrt{2c_0}\,\Lambda\). Then \(W''\ge\kappa^2W\). Since
\(W'(-\infty)=0\), one has \(W'\ge0\), and hence
\[
 \bigl((W')^2-\kappa^2W^2\bigr)'
 =2W'(W''-\kappa^2W)\ge0.
\]
The expression on the left tends to zero as \(y\to-\infty\). Thus
\(W'\ge\kappa W\), and consequently
\[
 W(y)\le e^{-\kappa(z-y)}W(z),
 \qquad y\le z\le\beta.
\]
Choose \(z\in(\gamma,\gamma')\) with
\(|H(z)|\lesssim\norm H_{L^2(\gamma,\gamma')}\). Then
\eqref{eq:forbidden-strip-L2} gives
\[
 \sup_{y\le c_1}|H(y)|
 \lesssim(\lambda\nu)^{-1/4}e^{-c\Lambda}
 \lesssim_M(\lambda\nu)^{-1/4}\Lambda^{-M}.
\]
Finally,
\[
 R_{N,m}(r(u))
 =\lambda^{-1/4}(1-u)^{-1/2}H(u/\nu).
\]
Since \(u\le c_1\nu<1/2\), this proves \eqref{eq:forbidden-radial}.
\end{proof}

\section{Weighted exponential-sum estimate}\label{sec:weighted}

This section proves the exponential-sum estimate used in the analysis of the
non-glancing modes. We first combine discrete van der Corput with Abel
summation to control exponential sums whose amplitudes have bounded variation.
A weighted $TT^*$ argument then yields Proposition~\ref{prop:weighted-sum},
which couples the radial parameter across all scales and avoids a logarithmic
loss. 

For a finite consecutive
subset \(\Gamma\) of a translate of \(2\mathbb Z\), define
\[
 \operatorname{Var}_\Gamma(a)
 =\sum_{\substack{m\in\Gamma\\m+2\in\Gamma}}|a(m+2)-a(m)|.
\]

\begin{lemma}[Weighted discrete van der Corput estimate]
\label{lem:discrete-vdc}
Let \(\Gamma\) be a finite consecutive subset of a translate of
\(2\mathbb Z\), put \(L=\#\Gamma\), and let \(0<\delta\le1\). Suppose that
\(\psi:\Gamma\to\mathbb R\) extends to a real \(C^2\) function on the convex
hull of \(\Gamma\), that \(\psi''\) has constant sign, and that
\[
 |\psi''(x)|\approx\delta.
\]
Then, for every \(a:\Gamma\to\mathbb C\),
\[
 \left|\sum_{m\in\Gamma}a(m)e^{i\psi(m)}\right|
 \lesssim\bigl(L\delta^{1/2}+\delta^{-1/2}\bigr)
 \bigl(\norm a_{\ell^\infty(\Gamma)}+\operatorname{Var}_\Gamma(a)\bigr).
\]
\end{lemma}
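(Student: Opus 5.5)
The plan has two steps. First I prove the unweighted estimate (amplitude identically one) for every consecutive subrange of \(\Gamma\). Then I transfer it to a general amplitude by Abel summation. Write \(\Gamma=\{m_0,m_0+2,\dots,m_0+2(L-1)\}\) and reparametrize by \(m=m_0+2k\), \(0\le k\le L-1\). The function \(\phi(k)=\psi(m_0+2k)\) is then \(C^2\) on \([0,L-1]\), \(\phi''\) has constant sign, and \(|\phi''|=4|\psi''|\approx\delta\). The goal of the first step is
\[
 \sup_{0\le K_1\le K_2\le L-1}\left|\sum_{k=K_1}^{K_2}e^{i\phi(k)}\right|
 \lesssim L\delta^{1/2}+\delta^{-1/2}.
\]
This is the classical second-derivative van der Corput test for exponential sums. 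Its usual form gives \(\lesssim (K_2-K_1)\delta^{1/2}+\delta^{-1/2}\) whenever \(\delta\lesssim\phi''\lesssim\delta\) and \(\delta\le1\). Since \(K_2-K_1\le L\), that is enough.

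If one prefers a self-contained argument, I would run the standard proof as follows. Since \(\phi''\) has one sign, \(\phi'\) is monotone on \([K_1,K_2]\) and its total range is \(\lesssim L\delta\). Split \([K_1,K_2]\) into \(O(L\delta+1)\) intervals on each of which \(\phi'\) lies within \(1/2\) of some integer multiple \(2\pi n\), up to the boundary slack. On each such interval, first remove a small neighbourhood of width \(\approx\delta^{-1/2}\) around the point where \(\phi'=2\pi n\); this costs \(O(\delta^{-1/2})\) terms. Outside that neighbourhood, \(|\phi'-2\pi n|\gtrsim\delta^{1/2}\) and \(\phi'-2\pi n\) is monotone, so Kusmin--Landau (the first-derivative test) bounds the remaining sum by \(O(\delta^{-1/2})\). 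Summing over the \(O(L\delta+1)\) pieces gives \((L\delta+1)\delta^{-1/2}=L\delta^{1/2}+\delta^{-1/2}\). Alternatively, one can invoke van der Corput's lemma in Kusmin--Landau/Poisson form directly; this should be routine, and I would cite it, e.g.\ Titchmarsh or Graham--Kolesnik, Theorem 2.2.

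The second step is Abel summation. Set \(E_j=\sum_{k=0}^{j}e^{i\phi(k)}\) and \(a_k=a(m_0+2k)\). Summation by parts gives
\[
 \sum_{k=0}^{L-1}a_ke^{i\phi(k)}=a_{L-1}E_{L-1}-\sum_{k=0}^{L-2}(a_{k+1}-a_k)E_k .
\]
Every \(E_j\) is a consecutive partial sum, so \(|E_j|\lesssim L\delta^{1/2}+\delta^{-1/2}\) by the first step, uniformly in \(j\). The first term is therefore bounded by \(\norm a_{\ell^\infty(\Gamma)}(L\delta^{1/2}+\delta^{-1/2})\), and the second by \(\operatorname{Var}_\Gamma(a)(L\delta^{1/2}+\delta^{-1/2})\). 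This yields the claim.

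The only substantive point is the uniformity of the van der Corput bound over sub-intervals, together with the requirement \(\delta\le1\). That requirement matters: when \(\delta>1\) the integer-neighbourhood reduction breaks down, because \(e^{i\phi}\) is only determined modulo \(2\pi\) in the derivative. Here \(\delta\le1\) is assumed, and rescaling \(m\mapsto k\) changes \(\delta\) only by the factor \(4\). If \(4\delta>1\) after rescaling, then \(L\delta^{1/2}\gtrsim L\), so the trivial bound \(L\) already suffices. This closes that edge case and is the step I would check most carefully.
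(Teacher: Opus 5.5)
Your proposal is correct and follows essentially the same route as the paper: reindex the parity lattice by consecutive integers, apply the second-derivative van der Corput test (Graham--Kolesnik, Theorem~2.2) uniformly over consecutive subranges, and conclude by Abel summation. Your Kusmin--Landau sketch and your use of the trivial bound in the edge case \(4\delta>1\) are valid additions, though the paper simply cites the theorem.
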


\begin{proof}
For every consecutive \(\Gamma_0\subset\Gamma\), reindex the parity lattice by
consecutive integers and apply the standard discrete van der Corput estimate
\cite[Chapter~2, Theorem~2.2]{GrahamKolesnik}. When
\(\#\Gamma_0\le2\), the same bound is immediate. Hence
\[
 \sup_{\Gamma_0}
 \left|\sum_{m\in\Gamma_0}e^{i\psi(m)}\right|
 \lesssim L\delta^{1/2}+\delta^{-1/2}.
\]
Abel summation on the ordered lattice proves the result.
\end{proof}

\begin{proposition}\label{prop:weighted-sum}
Let \(\lambda\ge1\), let \(I_\lambda\subset(0,\lambda^{-1})\) be an interval,
and let \(\Gamma\) be a finite consecutive subset of a translate of
\(2\mathbb Z\). Let
\(a:I_\lambda\times\Gamma\to\mathbb C\) and
\(\phi:I_\lambda\times\Gamma\to\mathbb R\) be measurable in \(t\), and set
\[
 \mathcal U(t)c(\theta)
 =\sum_{m\in\Gamma}a(t,m)e^{i(m\theta+\phi(t,m))}c_m.
\]
Assume uniformly in \(t\) that \(\operatorname{supp}a(t,\cdot)\) is contained
in a consecutive lattice interval with \(\lesssim1+\lambda^2t\) points and
\[
 \norm{a(t,\cdot)}_{\ell^\infty(\Gamma)}
 +\operatorname{Var}_\Gamma(a(t,\cdot))\lesssim1.
\]
Whenever \(t\ne s\) and the common support contains at least three points,
assume that \(\phi(t,\cdot)-\phi(s,\cdot)\) extends to a real \(C^2\) function
\(\Phi_{t,s}\) on its convex hull and
\[
 |\Phi_{t,s}''(x)|\approx|t-s|.
\]
Then, for every \(c\in\ell^2(\Gamma)\),
\begin{equation}\label{eq:weighted-sum-bound}
 \norm{t^{-1/5}\mathcal U(t)c}_{L^{10/3}(I_\lambda\times\T)}
 \lesssim\norm c_{\ell^2(\Gamma)}.
\end{equation}
\end{proposition}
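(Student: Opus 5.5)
We prove \eqref{eq:weighted-sum-bound} by duality and a \(TT^*\) argument in the \(t\)-variable, keeping the weight \(t^{-1/5}\) inside the kernel. Put \(p=10/3\), \(p'=10/7\). Define \(T c(t,\theta)=t^{-1/5}\mathcal U(t)c(\theta)\). It suffices to show
\[
 \|TT^*F\|_{L^{p}(I_\lambda\times\T)}\lesssim\|F\|_{L^{p'}(I_\lambda\times\T)}.
\]
We write
\[
 TT^*F(t,\cdot)=\int_{I_\lambda}t^{-1/5}s^{-1/5}K_{t,s}*F(s,\cdot)\dd s,
\]
where the convolution is on \(\T\) and
\[
 K_{t,s}(\theta)=\sum_{m}a(t,m)\overline{a(s,m)}e^{i(m\theta+\phi(t,m)-\phi(s,m))}.
\]
For fixed \(t,s\), convolution with \(K_{t,s}\) is a Fourier multiplier on \(\T\) with symbol bounded by \(\|a\|_\infty^2\lesssim1\). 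Hence it is bounded \(L^2\to L^2\) with norm \(O(1)\).

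For the \(L^1\to L^\infty\) bound we need \(\sup_\theta|K_{t,s}(\theta)|\). We apply Lemma~\ref{lem:discrete-vdc} to the phase \(m\theta+\Phi_{t,s}(m)\). Its second derivative is \(\Phi_{t,s}''\approx|t-s|\), and the linear term \(m\theta\) does not change this. The amplitude \(a(t,\cdot)\overline{a(s,\cdot)}\) has \(\ell^\infty\) plus variation \(\lesssim1\), since the variation of a product is bounded by the sup of one factor times the variation of the other. The common support has \(L\lesssim1+\lambda^2\min(t,s)\) points; if \(L\le2\) the bound \(O(1)\) is trivial. This gives
\[
 |K_{t,s}|\lesssim \min\bigl(L,\;L|t-s|^{1/2}+|t-s|^{-1/2}\bigr).
\]
Since \(t,s<\lambda^{-1}\), we have \(L|t-s|^{1/2}\lesssim(1+\lambda^2\min(t,s))\lambda^{-1/2}\). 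The key point is to check that this term is dominated by \(|t-s|^{-1/2}\) or by \(L\). One has \(\lambda^2\min(t,s)|t-s|^{1/2}\le\lambda^2\min(t,s)\lambda^{-1/2}\), but comparing with \(|t-s|^{-1/2}\) needs care. The cleaner route is to use \(\min(L,\cdot)\) together with \(L\lesssim1+\lambda^2\min\). In the worst case this still leaves \(|K|\lesssim1+|t-s|^{-1/2}+L|t-s|^{1/2}\).

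Interpolating the \(L^2\) and \(L^\infty\) bounds with \(\theta=2/p'-1\), i.e. exponent \(1-2/p=2/5\) on the \(L^1\to L^\infty\) norm, gives
\[
 \|K_{t,s}*g\|_{L^p(\T)}\lesssim B(t,s)^{2/5}\|g\|_{L^{p'}(\T)},
 \qquad B(t,s)\lesssim |t-s|^{-1/2}+L|t-s|^{1/2}+1.
\]
The main term is \(|t-s|^{-1/5}\). It produces the one-dimensional operator with kernel \(t^{-1/5}s^{-1/5}|t-s|^{-1/5}\) acting \(L^{p'}(I_\lambda)\to L^p(I_\lambda)\). By Hardy--Littlewood--Sobolev with the weights this operator is bounded: the total homogeneity is \(-3/5\), which matches \(1/p'-1/p=2/5\), i.e. \(1-2/5\). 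This is Stein--Weiss with \(\alpha=\beta=1/5\); the Stein--Weiss conditions \(\alpha<1/p\), \(\beta<1/p'\), \(\alpha+\beta\ge0\) all hold, and the scaling is exact. This is why no logarithm appears: the weight is coupled across all scales at once.

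The terms \(1\) and \(L|t-s|^{1/2}\) are the main obstacle. I would bound them crudely. Using \(L\lesssim1+\lambda^2\min(t,s)\) and \(t,s<\lambda^{-1}\), we get \((L|t-s|^{1/2})^{2/5}\lesssim(\lambda^2\min(t,s))^{2/5}\lambda^{-1/5}+1\). The kernel \(t^{-1/5}s^{-1/5}\) is integrable in the required sense on \(I_\lambda\subset(0,\lambda^{-1})\): it is a rank-one-type operator with norm \(\|t^{-1/5}\|_{L^p}\|s^{-1/5}\|_{L^p}\), and \(\int_0^{\lambda^{-1}}t^{-2/3}\,dt\approx\lambda^{-1/3}\). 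This gives norm \(\lambda^{-1/5}\) times the factor \(\lambda^{1/5}\), which is bounded. The other piece \(\lambda^{4/5-1/5}\min(t,s)^{2/5}t^{-1/5}s^{-1/5}\le\lambda^{3/5}\) is bounded by a constant times \(\lambda^{3/5}\). Its \(L^{p'}\to L^p\) norm on an interval of length \(\lambda^{-1}\) is \(\lambda^{3/5}\cdot\lambda^{-1/p-1/p}=\lambda^{3/5-3/5}=1\). So all error terms are \(O(1)\), and \(TT^*\) is bounded, which yields \eqref{eq:weighted-sum-bound}.
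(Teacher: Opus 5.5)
Your argument is correct and follows the paper's proof: $TT^*$ in $(t,\theta)$, the van der Corput bound on $K_{t,s}$, interpolation to get the $|t-s|^{-1/5}$ kernel, and the one-dimensional Stein--Weiss inequality. The paper proves that last inequality directly via $t=e^x$ and Young's inequality. Also, the correct condition on the input weight is $\beta<1/p=3/10$, not $\beta<1/p'$, though $\beta=1/5$ satisfies it. The only real difference is in the term $L|t-s|^{1/2}$: the paper disposes of it at once via $\lambda^2\min(t,s)|t-s|\le\lambda^2ts\le1$, which gives $|K_{t,s}|\lesssim|t-s|^{-1/2}$ outright, whereas your separate rank-one and bounded-kernel estimates for the leftover pieces are also valid, just longer.
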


\begin{proof}
Put \(p=10/3\), \(p'=10/7\), and
\(Tc(t,\theta)=t^{-1/5}\mathcal U(t)c(\theta)\). By \(TT^*\), it suffices to
prove that \(TT^*:L^{p'}(I_\lambda\times\T)\to
L^p(I_\lambda\times\T)\) is bounded.

For \(t\ne s\), the kernel of \(\mathcal U(t)\mathcal U(s)^*\) is
\[
 K_{t,s}(\omega)
 =\sum_{m\in\Gamma}a(t,m)\overline{a(s,m)}
 e^{i[m\omega+\phi(t,m)-\phi(s,m)]}.
\]
The product rule for discrete variation gives
\[
 \operatorname{Var}_\Gamma(a(t,\cdot)\overline{a(s,\cdot)})
 \le \norm{a(t,\cdot)}_\infty\operatorname{Var}_\Gamma(a(s,\cdot))
 +\norm{a(s,\cdot)}_\infty\operatorname{Var}_\Gamma(a(t,\cdot)),
\]
so the product amplitude has uniformly bounded supremum and variation. Its
support is contained in a consecutive interval with
\(\lesssim1+\lambda^2\min(t,s)\) points. If the common support has at most two
points, then \(|K_{t,s}|\lesssim1\). Otherwise, restrict the sum to its convex
hull. Since the continuous nonvanishing function \(\Phi_{t,s}''\) has constant
sign, Lemma~\ref{lem:discrete-vdc}, with \(\delta=|t-s|\), gives
\[
 |K_{t,s}(\omega)|
 \lesssim
 \bigl(1+\lambda^2\min(t,s)\bigr)|t-s|^{1/2}
 +|t-s|^{-1/2}.
\]
Because \(|t-s|\le1\) and
\[
 \lambda^2\min(t,s)|t-s|\le\lambda^2ts\le1,
\]
we obtain
\begin{equation}\label{eq:weighted-kernel}
 \norm{\mathcal U(t)\mathcal U(s)^*}_{L^1(\T)\to L^\infty(\T)}
 \lesssim|t-s|^{-1/2}.
\end{equation}
The same operator is a Fourier multiplier with uniformly bounded symbol, so
its \(L^2\to L^2\) norm is bounded. Interpolation with
\eqref{eq:weighted-kernel} yields
\begin{equation}\label{eq:osc-pp}
 \norm{\mathcal U(t)\mathcal U(s)^*}_{L^{p'}(\T)\to L^p(\T)}
 \lesssim|t-s|^{-1/5}.
\end{equation}

We also use the scalar estimate
\begin{equation}\label{eq:scalar-weighted-bound}
 \norm{\mathcal K F}_{L^p(0,\infty)}
 \lesssim\norm F_{L^{p'}(0,\infty)},
 \qquad
 \mathcal K F(t)=t^{-1/5}\int_0^\infty
 |t-s|^{-1/5}s^{-1/5}F(s)\dd s.
\end{equation}
To prove it, set
\[
 f(y)=e^{y/p'}F(e^y),\qquad
 h(x)=e^{x/p}\mathcal K F(e^x).
\]
Then \(\norm f_{p'}=\norm F_{p'}\),
\(\norm h_p=\norm{\mathcal K F}_p\), and the substitutions
\(t=e^x\), \(s=e^{x+z}\) give
\[
 h(x)=\int_\mathbb R k(z)f(x+z)\dd z,
 \qquad k(z)=e^{z/10}|1-e^z|^{-1/5}.
\]
The kernel is \(O(|z|^{-1/5})\) near zero and decays exponentially at both
ends, so \(k\in L^{5/3}(\mathbb R)\). Since
\[
 1+\frac1p=\frac1{p'}+\frac1{5/3},
\]
Young's inequality proves \eqref{eq:scalar-weighted-bound}.

Finally, Minkowski's inequality and \eqref{eq:osc-pp} give
\[
 \norm{TT^*H(t,\cdot)}_{L_\theta^p}
 \lesssim t^{-1/5}\int_{I_\lambda}|t-s|^{-1/5}s^{-1/5}
 \norm{H(s,\cdot)}_{L_\theta^{p'}}\dd s.
\]
Apply \eqref{eq:scalar-weighted-bound} after extending
\(s\mapsto\norm{H(s,\cdot)}_{L_\theta^{p'}}\) by zero outside
\(I_\lambda\). This proves the \(TT^*\) bound and hence
\eqref{eq:weighted-sum-bound}.
\end{proof}

\section{The non-glancing  modes}\label{sec:non-glancing}

We now apply the preceding weighted exponential-sum estimate to the non-glancing modes. 

For \(g\in\operatorname{Ran}\Pi_\lambda\), with coefficients as in
\eqref{eq:polar-expansion}, set
\[
 \Omega(u)=\{m\in\mathcal M_N:\nu_m\le\kappa_0^2u\}
\]
and, on \(\mathcal A_\lambda\), define
\begin{equation}\label{eq:ng-definition}
 g_{<}(r(u),\theta)
 =\frac1{\sqrt{2\pi}}\sum_{m\in\Omega(u)}
 c_mR_{N,m}(r(u))e^{im\theta}.
\end{equation}

\begin{proposition}\label{prop:below-threshold}
For every \(g\in\operatorname{Ran}\Pi_\lambda\),
\[
 \norm{g_{<}}_{L^{10/3}(\mathcal A_\lambda)}
 \lesssim\lambda^{-1/10}\norm g_2.
\]
\end{proposition}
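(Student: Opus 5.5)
The plan is to pass from \(x\) to the variables \((t,\theta)\) with \(u=u_t=\lambda^2t^2\), i.e.\ \(t=\sqrt u/\lambda\). Then \(\mathcal A_\lambda\) corresponds to \(t\) in an interval \(I_\lambda\subset(0,\lambda^{-1})\), since \(u\le u_0<1\). Because \(r\,\dd r=\tfrac\lambda2\dd u\) and \(\dd u=2\lambda^2t\dd t\), the area element becomes \(\dd x=\lambda^3t\,\dd t\,\dd\theta\).

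\emph{Step 1: the mode set.} Since \(\nu_m\ge m^2/(2\lambda^2)\) for \(|m|\ge1\) by \eqref{eq:nu-definition}, the condition \(m\in\Omega(u)\) forces \(|m|\le\sqrt2\kappa_0\lambda\sqrt u\). Shrinking \(\kappa_0\) relative to \(\kappa_1\) if needed, we get \(\Omega(u)\subset\mathcal I(u)\subset\mathcal I(U_1)\). Moreover \(\Omega(u)\) is a symmetric consecutive lattice interval with \(\lesssim1+\lambda\sqrt u=1+\lambda^2t\) points, because \(\nu_m\) is increasing in \(|m|\).

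\emph{Step 2: main term.} Insert \eqref{eq:lg-expansion} into \eqref{eq:ng-definition}. For each \(\sigma=\pm\), absorb the bounded coefficients into the data by setting \(c^\sigma_m=c_md_{m,\sigma}\), so that \(\norm{c^\sigma}_{\ell^2}\lesssim\norm g_2\). This step is essential, because \(d_{m,\sigma}\) has no controlled variation in \(m\). Using \(u^{-1/4}=\lambda^{-1/2}t^{-1/2}\), the main part equals
\[
(2\pi)^{-1/2}\lambda^{-1}t^{-1/2}\sum_\sigma\mathcal U_\sigma(t)c^\sigma,
\]
where
\[
a(t,m)=\one_{\Omega(u_t)}(m)\,b(u_t,m),\qquad \phi(t,m)=\sigma S(u_t,m).
\]
The amplitude hypotheses of Proposition~\ref{prop:weighted-sum} hold for the following reasons. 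First, \(\norm{a}_\infty\lesssim1\) by \eqref{eq:lg-symbol}. Second, the variation of \(a(t,\cdot)\) is at most the two jumps of the indicator plus \(\#\Omega(u)\cdot(\lambda\sqrt u)^{-1}\lesssim1\), again by \eqref{eq:lg-symbol}. For the phase, the common support of \(a(t,\cdot)\) and \(a(s,\cdot)\) lies in \(|m|\le\kappa_1\lambda^2\min(s,t)\). Hence Lemma~\ref{lem:phase-second-variation} applies to the smooth extension \(\xi\mapsto\sigma(S(u_t,\xi)-S(u_s,\xi))\). It gives a second derivative of constant sign and size \(\approx|t-s|\).

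With these checks in place, the \(L^{10/3}(\mathcal A_\lambda)\) norm of the main term is
\[
\Bigl(\int\lambda^{-10/3}t^{-5/3}|\mathcal U_\sigma c^\sigma|^{10/3}\lambda^3t\,\dd t\,\dd\theta\Bigr)^{3/10}
=\lambda^{-1/10}\norm{t^{-1/5}\mathcal U_\sigma(t)c^\sigma}_{L^{10/3}(I_\lambda\times\T)}.
\]
By \eqref{eq:weighted-sum-bound} this is \(\lesssim\lambda^{-1/10}\norm g_2\).

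\emph{Step 3: error term.} For fixed \(u\), consider
\[
E(u,\theta)=\sum_{m\in\Omega(u)}c_m\mathcal E_m(u)e^{im\theta}.
\]
Plancherel and \eqref{eq:lg-error} give
\[
\norm{E(u,\cdot)}_{L^2_\theta}\lesssim\lambda^{-3/2}u^{-7/4}\norm g_2.
\]
Cauchy--Schwarz over the \(\lesssim\lambda\sqrt u\) modes gives an \(L^\infty_\theta\) bound larger by \((\lambda\sqrt u)^{1/2}\). Interpolating the two bounds yields
\[
\norm{E(u,\cdot)}_{L^{10/3}_\theta}\lesssim\lambda^{-13/10}u^{-33/20}\norm g_2.
\]
Integrating with \(r\,\dd r\approx\lambda\,\dd u\) over \(u\ge C_*u_*\) then gives
\[
\norm{E}^{10/3}_{L^{10/3}(\mathcal A_\lambda)}\lesssim\lambda^{-13/3}\cdot\lambda\int_{C_*\lambda^{-2/3}}^\infty u^{-11/2}\dd u\;\norm g_2^{10/3}\lesssim\lambda^{-1/3}\norm g_2^{10/3},
\]
which is exactly \(\lambda^{-1/10}\) after taking the \(3/10\) power.

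\emph{Main obstacle.} The delicate point is Step 2: verifying that the amplitude \(\one_{\Omega(u)}b\) has uniformly bounded variation, and that the curvature hypothesis of Proposition~\ref{prop:weighted-sum} holds uniformly on the common support across all scales \(s,t\). This requires the inclusion \(\Omega\subset\mathcal I\) with the \(\min(s,t)\) restriction. The error term is borderline at the turning scale, but it is summable because the lower cutoff \(u\ge C_*u_*\) is exactly at the Koch--Tataru scale.
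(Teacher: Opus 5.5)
Your proposal is correct and follows the paper's proof essentially step for step. It uses the same absorption \(c^\sigma_m=d_{m,\sigma}c_m\), the same amplitude \(\one_{\Omega(u_t)}b(u_t,\cdot)\) and phase checks feeding Proposition~\ref{prop:weighted-sum}, the same change of variables \(r\,\dd r=\lambda^3t\,\dd t\), and the same remainder bound \(\lambda^{-13/10}u^{-33/20}\norm g_2\). There are only two minor differences: you get \(\Omega(u)\subset\mathcal I(u)\) by imposing \(\sqrt2\kappa_0\le\kappa_1\), which is harmless since these constants are free, whereas the paper uses the largeness of \(C_*\); and you reach the remainder bound by interpolating Plancherel with Cauchy--Schwarz rather than by Hausdorff--Young and H\"older.
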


\begin{proof}
Put \(p=10/3\). Since \(\nu_m\) is nondecreasing in \(|m|\),
\(\Omega(u)\) is a consecutive symmetric interval in \(\mathcal M_N\). For
\(m\in\Omega(u)\),
\[
 m^2=\frac14+\lambda^2\nu_m(1-\nu_m)
 \le\frac14+\kappa_0^2\lambda^2u.
\]
Hence
\begin{equation}\label{eq:below-threshold-support}
 \#\Omega(u)\lesssim1+\lambda\sqrt u,
 \qquad \Omega(u)\subset\mathcal I(u).
\end{equation}
Indeed, the second assertion follows from
\(1/4\le(\kappa_1^2-\kappa_0^2)\lambda^2u\), which is ensured by the
choice of \(C_*\). Thus Lemma~\ref{lem:lg-radial} applies to all modes in
\eqref{eq:ng-definition}.

Recall that \(u_t=\lambda^2t^2\), and put
\[
 I_\lambda=
 [\sqrt{C_*}\lambda^{-4/3},\sqrt{u_0}\lambda^{-1}]
 \subset(0,\lambda^{-1}).
\]
On this interval, \(\lambda^2t=\lambda\sqrt{u_t}\gtrsim1\). Extend \(d_{m,\sigma}\) by zero for \(m\notin\mathcal I(U_1)\), set
\(c_m^\sigma=d_{m,\sigma}c_m\), and, for \(\sigma=\pm\), define
\[
 \mathcal U_\sigma(t)h(\theta)
 =\sum_{m\in\Omega(u_t)}b(u_t,m)
 e^{i(m\theta+\sigma S(u_t,m))}h_m,
 \qquad t\in I_\lambda.
\]
Then \(\norm{c^\sigma}_2\lesssim\norm c_2\), and
Lemma~\ref{lem:lg-radial} writes the leading part of \(g_<\) as
\[
 \frac1{\sqrt{2\pi}}\lambda^{-1}t^{-1/2}
 \sum_{\sigma=\pm}\mathcal U_\sigma(t)c^\sigma(\theta).
\]

We apply Proposition~\ref{prop:weighted-sum} with
\(\Gamma=\mathcal M_N\) to each \(\mathcal U_\sigma\). Its amplitude is
\(\mathbf1_{\Omega(u_t)}(m)b(u_t,m)\). By
\eqref{eq:below-threshold-support}, its support has
\(O(1+\lambda^2t)\) points, while the two boundary jumps and
\eqref{eq:lg-symbol} give
\[
 \norm{\mathbf1_{\Omega(u_t)}b(u_t,\cdot)}_\infty
 +\operatorname{Var}_{\mathcal M_N}
   (\mathbf1_{\Omega(u_t)}b(u_t,\cdot))
 \lesssim1+\frac{\#\Omega(u_t)}{\lambda^2t}
 \lesssim1.
\]
For the formal definition of the phase on the whole lattice, extend
\(\sigma S(u_t,m)\) by zero when \(m\notin\mathcal I(u_t)\); this does not
change \(\mathcal U_\sigma(t)\), and the resulting functions are measurable.
If \(t\ne s\), then
\[
 \Omega(u_t)\cap\Omega(u_s)=\Omega(\min\{u_t,u_s\}),
\]
whose real convex hull lies in
\(\{|\xi|\le\kappa_1\lambda^2\min(t,s)\}\). On this hull the phase
difference is
\[
 \sigma\bigl(S(u_t,\xi)-S(u_s,\xi)\bigr),
\]
and Lemma~\ref{lem:phase-second-variation} gives the required second
derivative bound and constant sign. Proposition~\ref{prop:weighted-sum}
therefore applies.

The factor \((2\pi)^{-1/2}\) is harmless. Since
\(|r\dd r|=\lambda^3t\dd t\),
\[
 \begin{aligned}
 \norm{\lambda^{-1}t^{-1/2}
 \mathcal U_\sigma(t)c^\sigma}_{L^p(r\dd r\dd\theta)}=\lambda^{-1/10}
 \norm{t^{-1/5}\mathcal U_\sigma(t)c^\sigma}_{L^p(\dd t\dd\theta)}.
 \end{aligned}
\]
Thus \eqref{eq:weighted-sum-bound} bounds the displayed leading term by
\(\lambda^{-1/10}\norm c_2\).

It remains to estimate the sum of the Liouville--Green remainders. Since
\(\lambda\sqrt u\gtrsim1\) on \(\mathcal A_\lambda\), Hausdorff--Young,
H\"older, \eqref{eq:below-threshold-support}, and
\eqref{eq:lg-error},
\[
 \begin{aligned}
 \left\|
 \sum_{m\in\Omega(u)}c_m\mathcal E_m(u)e^{im\theta}
 \right\|_{L_\theta^p}\lesssim
 (\lambda\sqrt u)^{1/5}
 \lambda^{-1/2}u^{-1/4}(\lambda u^{3/2})^{-1}\norm c_2
 =\lambda^{-13/10}u^{-33/20}\norm c_2.
 \end{aligned}
\]
Using \(|r\dd r|=(\lambda/2)\dd u\) and \(u_*=\lambda^{-2/3}\), we obtain
\[
 \begin{aligned}
 \left\|
 \sum_{m\in\Omega(u)}c_m\mathcal E_m(u)e^{im\theta}
 \right\|_{L^p(\mathcal A_\lambda)}^p
\lesssim
 \lambda^{-10/3}\int_{C_*u_*}^{u_0}u^{-11/2}\dd u\,\norm c_2^p
 \lesssim\lambda^{-1/3}\norm c_2^p.
 \end{aligned}
\]
Taking the \(p\)-th root and using \(\norm c_2=\norm g_2\) completes the
proof.
\end{proof}

\section{The remaining  modes}\label{sec:complementary}

This section treats the modes not covered by the non-glancing estimate. We
split them into a component near the radial turning point and a component lying
deep in the forbidden region. The near component is controlled on dyadic
annuli by the localized Koch--Tataru estimates, whereas the far component is controlled by the rapid decay from
Lemma~\ref{lem:forbidden-radial}. 

For \(g\in\operatorname{Ran}\Pi_\lambda\), set
$
 g_{>}=g-g_{<}$ on $\mathcal A_\lambda.$
\begin{proposition}\label{prop:complementary}
For every \(g\in\operatorname{Ran}\Pi_\lambda\), the component \(g_>\)
satisfies
\[
 \norm{g_{>}}_{L^{10/3}(\mathcal A_\lambda)}
 \lesssim\lambda^{-1/10}\norm g_2.
\]
\end{proposition}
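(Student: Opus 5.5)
The plan is to split the modes in \(g_>\) according to where \(u\) lies relative to the radial turning parameter \(\nu_m\). On \(\mathcal A_\lambda\) we have \(m\notin\Omega(u)\) exactly when \(\nu_m>\kappa_0^2u\). Write \(g_>=g_{\rm near}+g_{\rm far}\), where \(g_{\rm near}\) keeps the modes with \(\kappa_0^2u<\nu_m\le u/c_1\) and \(g_{\rm far}\) keeps those with \(\nu_m>u/c_1\), i.e.\ \(u<c_1\nu_m\). Both cutoffs are consecutive symmetric intervals in \(\mathcal M_N\), because \(\nu_m\) is nondecreasing in \(|m|\). The near part is the "comparable scales" piece and will be handled by the annular Koch--Tataru bound \eqref{eq:package-allowed}. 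The far part lies in the forbidden region and will be handled by Lemma~\ref{lem:forbidden-radial}.

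\emph{Near part.} Cover \([C_*u_*,u_0]\) by dyadic annuli \(\mathcal A_\mu^{\rm in}\). For each \(\mu\), let \(J_\mu=\{m:\kappa_0^2\mu/2<\nu_m\le 2\mu/c_1\}\), and set
\[
 P_\mu g=\frac1{\sqrt{2\pi}}\sum_{m\in J_\mu}c_mR_{N,m}e^{im\theta}\in\operatorname{Ran}\Pi_\lambda .
\]
Its \(L^2\) norm is \(\norm{c\one_{J_\mu}}_2\), by Lemma~\ref{lem:polar-basis}. For \(x\in\mathcal A_\mu^{\rm in}\), the function \(g_{\rm near}(r(u),\cdot)\) is obtained from \(P_\mu g(r(u),\cdot)\) by an angular Fourier multiplier whose symbol is the indicator of a lattice interval. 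That interval depends on \(u\), but its endpoints do not matter for the bound.

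Indicator multipliers of intervals are bounded on \(L^{10/3}(\T)\) uniformly, since each is a combination of modulated Riesz projections. Apply this circle by circle at each fixed \(r\), then integrate in \(r\\,\dd r\). This gives
\[
 \norm{\one_{\mathcal A_\mu^{\rm in}}g_{\rm near}}_{10/3}\lesssim\norm{\one_{\mathcal A_\mu^{\rm in}}P_\mu g}_{10/3}\lesssim\lambda^{-1/10}\norm{c\one_{J_\mu}}_2
\]
by \eqref{eq:package-allowed}, which applies because \(C_*u_*\le\mu\le u_0\). The sets \(J_\mu\) have bounded overlap in \(m\), since each \(\nu_m\) lies in \(O(1)\) of the dyadic ranges. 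The annuli also have bounded overlap. Since \(10/3>2\), summing \(10/3\)-th powers gives \(\sum_\mu\norm{c\one_{J_\mu}}_2^{10/3}\le(\sum_\mu\norm{c\one_{J_\mu}}_2^2)^{5/3}\lesssim\norm c_2^{10/3}\). So there is no logarithmic loss here.

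\emph{Far part.} If \(u<c_1\nu_m\) and \(u\ge C_*u_*\), then \(\nu_m\ge C_*u_*\), so Lemma~\ref{lem:forbidden-radial} applies. Group the modes dyadically by \(\nu_m\approx\nu_k=2^k\nu\)-scale, with \(\Lambda_k=\lambda\nu_k^{3/2}\gtrsim1\), and let \(c^{(k)}\) be the corresponding coefficient blocks.

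Each block contains \(\lesssim\lambda\sqrt{\nu_k}\) modes. At fixed \(u\), Hausdorff--Young followed by H\"older gives an \(L^{10/3}_\theta\) bound of
\[
 (\lambda\sqrt{\nu_k})^{1/5}\lambda^{-1/2}\nu_k^{-1/4}\Lambda_k^{-M}\norm{c^{(k)}}_2 .
\]
Integrate in \(u\) over \([0,c_1\nu_k]\) with \(r\\,\dd r=(\lambda/2)\dd u\). The \(10/3\)-th power is then bounded by \(\nu_k^{1/2}\Lambda_k^{-10M/3}\norm{c^{(k)}}_2^{10/3}\). Since \(\nu_k^{1/2}=\lambda^{-1/3}\Lambda_k^{1/3}\), the block contributes at most \(\lambda^{-1/10}\Lambda_k^{1/10-M}\norm{c^{(k)}}_2\). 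The \(\Lambda_k\) are dyadic, of size at least \(C_*^{3/2}\). Minkowski's inequality over \(k\) therefore sums a geometric series and gives \(\lesssim\lambda^{-1/10}\norm c_2\).

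The main obstacle I expect is the near part, specifically justifying that the \(u\)-dependent mode cutoff costs only a uniform constant. My first approach is the fibrewise argument above: uniform \(L^p(\T)\) boundedness of interval multipliers on each circle. I also need to check that \(J_\mu\) contains every mode that can appear at any \(u\in\mathcal A_\mu^{\rm in}\). If \(2\mu/c_1\) exceeds the range where \eqref{eq:package-allowed} is stated, this costs nothing: \eqref{eq:package-allowed} is applied to the annulus in \(u\), not in \(\nu\), and \(P_\mu g\) is simply an element of \(\operatorname{Ran}\Pi_\lambda\).
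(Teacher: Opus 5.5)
Your proposal is correct. The near part is essentially the paper's argument. Your $P_\mu g$ is the paper's $G_j$, and the steps match: the $u$-dependent angular cutoff is bounded uniformly on $L^{10/3}(\T)$ by M.~Riesz, then you apply \eqref{eq:package-allowed}, use bounded overlap in $m$, and use $\ell^2\subset\ell^{10/3}$. One small correction: the set $\{m:\kappa_0^2u<\nu_m\le u/c_1\}$ is symmetric in $m$, so it is a union of two lattice intervals, not one. Likewise, the far cutoff is two end intervals, not a consecutive symmetric interval. The multiplier is therefore a difference of two interval multipliers, and the uniform bound is unaffected.

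For the far part you use a different bookkeeping from the paper. The paper works radius by radius:
\begin{itemize}
\item it takes $M=1/2$ in Lemma~\ref{lem:forbidden-radial}, giving $|R_{N,m}(r(u))|\lesssim\lambda^{-1}\nu_m^{-1}$;
\item it sums all far modes at once with the tail bound $\sum_{\nu_m\ge a}\nu_m^{-5}\lesssim\lambda a^{-9/2}$, $a=u/c_1$;
\item this gives $\lambda^{-4/5}u^{-9/10}\|g\|_2$ in $L^{10/3}_\theta$, and integrating in $u$ yields $\lambda^{-1/3}$, because $\int_{C_*u_*}u^{-3}\,du$ is dominated by the bottom of the collar.
\end{itemize}
You instead group the modes dyadically in $\nu_m$ and bound each block in $L^{10/3}$ over its whole radial support $u\lesssim\nu_k$. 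You then sum the blocks by Minkowski using the geometric factor $\Lambda_k^{1/10-M}$. Your exponent bookkeeping is right, including $\nu_k^{1/2}=\lambda^{-1/3}\Lambda_k^{1/3}$, and any fixed $M>1/10$ suffices. If your blocks are $\nu_m\in[\nu_k,2\nu_k)$, the radial support is $u<2c_1\nu_k$ rather than $u\le c_1\nu_k$; this is harmless.

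The paper's route is shorter: one pointwise estimate, one tail sum, no mode decomposition. Yours parallels the near-part argument. It also makes the absence of a logarithm visible as geometric decay across mode scales, rather than as convergence of a radial integral at its lower endpoint.
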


\begin{proof}
Put \(p=10/3\). We first split the modes according to their distance from the
turning point:
\[
 \begin{aligned}
 g_{\mathrm{near}}(r(u),\theta)
 &=\frac1{\sqrt{2\pi}}
   \sum_{\kappa_0^2u<\nu_m<u/c_1}
   c_mR_{N,m}(r(u))e^{im\theta},\\
 g_{\mathrm{far}}(r(u),\theta)
 &=\frac1{\sqrt{2\pi}}
   \sum_{u\le c_1\nu_m}
   c_mR_{N,m}(r(u))e^{im\theta}.
 \end{aligned}
\]
Thus \(g_>=g_{\mathrm{near}}+g_{\mathrm{far}}\) on
\(\mathcal A_\lambda\).

We begin with the far part. Since \(u\ge C_*u_*\), every mode occurring in
\(g_{\mathrm{far}}\) satisfies the hypotheses of
Lemma~\ref{lem:forbidden-radial}. Taking \(M=1/2\) gives the particularly
simple bound
\begin{equation}\label{eq:far-mode-decay}
 |R_{N,m}(r(u))|\lesssim\lambda^{-1}\nu_m^{-1}
 \qquad (u\le c_1\nu_m).
\end{equation}
Moreover, \eqref{eq:nu-m-comparable} implies that, for
\(a\ge C_*u_*\),
\begin{equation}\label{eq:nu-tail-sum}
 \sum_{\nu_m\ge a}\nu_m^{-5}
 \lesssim
 \sum_{\ell\ge0}(2^\ell a)^{-5}
 \#\{m:0<\nu_m<2^{\ell+1}a\}
 \lesssim\lambda a^{-9/2},
\end{equation}
where we used \(\#\{m:0<\nu_m<v\}\lesssim\lambda\sqrt v\).
Since \(p'=10/7\) and \(1/p'=1/2+1/5\), Hausdorff--Young, H\"older,
\eqref{eq:far-mode-decay}, and \eqref{eq:nu-tail-sum}, with
\(a=u/c_1\), yield
\[
 \begin{aligned}
 \norm{g_{\mathrm{far}}(r(u),\cdot)}_{L_\theta^p}
 &\lesssim
 \left(\sum_{u\le c_1\nu_m}|c_mR_{N,m}(r(u))|^{p'}\right)^{1/p'}\\
 &\le \norm c_2
 \left(\sum_{u\le c_1\nu_m}|R_{N,m}(r(u))|^5\right)^{1/5}\\
 &\lesssim\lambda^{-4/5}u^{-9/10}\norm g_2.
 \end{aligned}
\]
Using \(|r\dd r|=(\lambda/2)\dd u\) and \(u_*=\lambda^{-2/3}\), we obtain
\[
 \norm{g_{\mathrm{far}}}_p^p
 \lesssim
 \lambda^{-5/3}\int_{C_*u_*}^{u_0}u^{-3}\dd u\,\norm g_2^p
 \lesssim\lambda^{-1/3}\norm g_2^p.
\]
Hence
\begin{equation}\label{eq:far-component-bound}
 \norm{g_{\mathrm{far}}}_p
 \lesssim\lambda^{-1/10}\norm g_2.
\end{equation}

It remains to treat the near part. For \(j\ge0\) with
\(\rho_j=2^jC_*u_*\le u_0\), set
\[
 \mathcal A_j
 =\{x\in\mathcal A_\lambda:\rho_j/2\le u(x)\le2\rho_j\}
\]
and
\[
 G_j(r,\theta)=\frac1{\sqrt{2\pi}}
 \sum_{\kappa_0^2\rho_j/2<\nu_m<2\rho_j/c_1}
 c_mR_{N,m}(r)e^{im\theta}.
\]
The annuli \(\mathcal A_j\) cover \(\mathcal A_\lambda\) with bounded overlap.
At a fixed radius, let \(\mathsf Q_u\) denote the angular Fourier projection
onto
\[
 \{m\in\mathcal M_N:\kappa_0^2u<\nu_m<u/c_1\}.
\]
Since \(\nu_m\) is nondecreasing in \(|m|\), its symbol is the sum of at most
two interval indicators on the step-two lattice \(\mathcal M_N\). The
M.~Riesz theorem
\cite[Proposition~4.1.6 and Theorem~4.1.7]{Grafakos} therefore gives
\[
 \sup_{u>0}\norm{\mathsf Q_u}_{L^p(\T)\to L^p(\T)}\lesssim1.
\]
If \(x\in\mathcal A_j\), every mode occurring in
\(g_{\mathrm{near}}(x)\) satisfies $\kappa_0^2\rho_j/2<\nu_m<2\rho_j/c_1$.
Thus \(g_{\mathrm{near}}=\mathsf Q_uG_j\) on \(\mathcal A_j\), and
\eqref{eq:package-allowed}, applied to
\(G_j\in\operatorname{Ran}\Pi_\lambda\), yields
\begin{equation}\label{eq:near-annular-bound}
 \norm{\one_{\mathcal A_j}g_{\mathrm{near}}}_p
 \lesssim\norm{\one_{\mathcal A_j}G_j}_p
 \lesssim\lambda^{-1/10}\norm{G_j}_2.
\end{equation}
Moreover,
\[
 \norm{G_j}_2^2
 =\sum_{\kappa_0^2\rho_j/2<\nu_m<2\rho_j/c_1}|c_m|^2.
\]
Each \(m\) occurs in only \(O(1)\) of these sums. Hence
\[
 \sum_j\norm{G_j}_2^2\lesssim\norm g_2^2,
 \qquad
 \sum_j\norm{G_j}_2^p
 \le\left(\sum_j\norm{G_j}_2^2\right)^{p/2}
 \lesssim\norm g_2^p,
\]
where the middle inequality uses \(p>2\). Combining this with the bounded
overlap of the annuli and \eqref{eq:near-annular-bound}, we obtain
\begin{equation}\label{eq:near-component-bound}
 \norm{g_{\mathrm{near}}}_p
 \lesssim\lambda^{-1/10}\norm g_2.
\end{equation}
The proposition follows from
\eqref{eq:far-component-bound} and \eqref{eq:near-component-bound}.
\end{proof}

\section{Proof of Theorem~\ref{thm:main}}\label{sec:proof-main}

Let \(g=\Pi_\lambda f\). Then
\(\norm g_2\le\norm f_2\), and let
\(u_{\mathrm{min}}=C_*u_*\).
The deep-interior bound \eqref{eq:package-deep} applies on \(\{u\ge u_0\}\).
Propositions~\ref{prop:below-threshold} and
\ref{prop:complementary}, together with
\(g=g_{<}+g_{>}\), give
\[
 \norm g_{L^{10/3}(\mathcal A_\lambda)}
 \lesssim\lambda^{-1/10}\norm f_2.
\]
Estimate \eqref{eq:package-core}, with \(A=4C_*\), applies on
\(\{|u|\le4u_{\mathrm{min}}\}\). Estimate \eqref{eq:outer-sum}
applies on \(\{u_{\mathrm{min}}\le\eta\le\eta_0\}\), and
\eqref{eq:package-far-outer} applies on
\(\{\eta\ge\eta_0/2\}\).

These five regions cover \(\R^2\). Indeed, an interior point lies in one of
\(\{u\ge u_0\}\), \(\mathcal A_\lambda\), and
\(\{|u|\le4u_{\mathrm{min}}\}\). If an exterior point does not lie in
\(\{|u|\le4u_{\mathrm{min}}\}\), then \(-u>4u_{\mathrm{min}}\). The alternative
\(\eta<u_{\mathrm{min}}\) would give
\[
 -u=2\eta+\eta^2
 <2u_{\mathrm{min}}+u_{\mathrm{min}}^2<4u_{\mathrm{min}},
\]
a contradiction. The point therefore lies either in
\(u_{\mathrm{min}}\le\eta\le\eta_0\) or in \(\eta\ge\eta_0/2\). Thus
the five estimates above prove \eqref{eq:main}.

\end{document}